\numberwithin{equation}{section}
\numberwithin{figure}{section}
\theoremstyle{plain}
\newtheorem{thm}{\protect\theoremname}[section]
\theoremstyle{remark}
\newtheorem{rem}[thm]{\protect\remarkname}
\theoremstyle{plain}
\newtheorem{prop}[thm]{\protect\propositionname}
\theoremstyle{plain}
\newtheorem{lem}[thm]{\protect\lemmaname}
\theoremstyle{definition}
\newtheorem{defn}[thm]{\protect\definitionname}
\theoremstyle{remark}
\newtheorem{observation}[thm]{\protect\observationname}
\theoremstyle{remark}
\newtheorem{claim}[thm]{\protect\claimname}
\patchcmd{\@settitle}{\uppercasenonmath\@title}{}{}{}
\patchcmd{\@setauthors}{\MakeUppercase}{}{}{}
\patchcmd{\section}{\scshape}{}{}{}
\providecommand{\claimname}{Claim}
\providecommand{\definitionname}{Definition}
\providecommand{\lemmaname}{Lemma}
\providecommand{\observationname}{Observation}
\providecommand{\propositionname}{Proposition}
\providecommand{\remarkname}{Remark}
\providecommand{\theoremname}{Theorem}
\begin{document}
\global\long\def\One{\mathds{1}}%

\global\long\def\Laplacian{\Delta}%

\global\long\def\grad{\nabla}%

\global\long\def\norm#1{\left\Vert #1\right\Vert }%

\global\long\def\zz{\mathbb{Z}}%

\global\long\def\rr{\mathbb{R}}%

\global\long\def\nn{\mathbb{N}}%

\global\long\def\pp{\mathbb{P}}%

\global\long\def\ee{\mathbb{E}}%

\global\long\def\floor#1{\left\lfloor #1\right\rfloor }%

\global\long\def\ceil#1{\left\lceil #1\right\rceil }%

\global\long\def\var{\operatorname{Var}}%

\global\long\def\dd{\operatorname{d}}%

\global\long\def\gap{\operatorname{gap}}%

\global\long\def\meet{\operatorname{meet}}%

\title[FA$1$f in stationarity]{A note on the Fredrickson-Andersen one spin facilitated model in stationarity}
\author{Assaf Shapira}
\email{\href{mailto:assaf.shapira@normalesup.org}{assaf.shapira@normalesup.org}}
\urladdr{\href{https://assafshap.github.io/}{assafshap.github.io}}
\thanks{The author acknowledges the support of the ERC Starting Grant 680275
MALIG}
\begin{abstract}
This note discusses three problems related to the Fredrickson-Andersen
one spin facilitated model in stationarity. The first, considered
in 2008 in a paper of Cancrini, Martinelli, Roberto and Toninelli,
is the spectral gap of the model's infinitesimal generator. They study
the decay of this spectral gap when the density is large, but in dimensions
$3$ and higher, they do not find the exact exponent. They also show
that the persistence function of the model has exponential tail, but
the typical decay time is not analyzed. We will see that the correct
exponent for the decay of the spectral gap in dimension $3$ and higher
is $2$, and discover how the time over which the persistence function
decays diverges in high densities. We also discuss the scaling of
the spectral gap in finite graphs.
\end{abstract}

\maketitle

\section{Introduction and results}

The purpose of this note is to present three small results on the
Fredrickson-Andersen one spin facilitated model (FA1f), following
\cite{CMRT}, addressing two problems that have not been solved there
and one tightly related problem in a slightly different setting studied
in \cite{HMT,PillaiSmith,PillaiSmith2}. Since it is, in a sense,
an extension of \cite{CMRT}, the reader is referred to \cite{CMRT}
for the relevant background, references, and complete introduction
of the model and notation.

We will only briefly remind here that sites in $\zz^{d}$ could be
either occupied or empty, with equilibrium probabilities $1-q$ and
$q$ respectively (where $q$ is thought of as small). When a site
has at least one empty neighbor, it is being resampled from equilibrium
with rate $1$, and otherwise its occupation cannot change. The process
is reversible with respect to the invariant measure $\mu$, given
by an independent product of Bernoulli random variables with parameter
$1-q$. Probabilities and expected values with respect to the stochastic
process are denoted $\pp_{\mu}$ and $\ee_{\mu}$, where the subscript
$\mu$ indicates that the initial configuration is drawn from equilibrium.

The first result here completes Theorem 6.4 of \cite{CMRT}, which
bounds the spectral gap of the FA1f model. It is shown there that
the gap decays polynomially as the parameter $q$ tends to $0$, and
for dimensions $1$ and $2$ the exact exponent is identified, up
to a logarithmic correction in dimension $2$. For dimension $d\ge3$,
however, the exponent is bounded between $1+2/d$ and $2$, and its
exact value is not determined. The following theorem shows that the
correct scaling is $q^{2}$ --
\begin{thm}
\label{thm:gap}Consider the setting of \cite[Theorem 6.4]{CMRT},
in dimension $d\ge3$. Then there exists a positive constant $C$
(possibly depending on $d$) such that
\[
\gap(\mathcal{L})\le C\,q^{2}.
\]
\end{thm}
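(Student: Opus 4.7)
The strategy is the variational characterization
\[
\gap(\mathcal{L}) \;\le\; \frac{\mathcal{D}(f)}{\var_{\mu}(f)},
\]
and the problem reduces to constructing a test function $f$ with ratio of order $q^{2}$. Direct attempts with local observables fall short: the single-site variable $\omega_{0}$ gives ratio of order $q$, indicators $\prod_{y\in\Lambda_{L}}\omega_{y}$ give at best $q^{1+1/d}$ at the optimum $L\sim q^{-1/d}$, and even a Cheeger-type bound $\gap\le\Phi(A)$ applied to such an event saturates at the same $q^{1+1/d}$. The missing improvement must come from a test function targeting the rarest local relaxation, namely the single-site transition $\omega_{0}\colon 1\to 0$, which in stationarity occurs at rate $\sim q\cdot q=q^{2}$: a factor $q$ for the presence of an empty neighbor times a factor $q$ from the resampling outcome.

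Concretely, I would try a test function of the form
\[
f(\omega) \;=\; \omega_{0}\,\prod_{y\in S}\omega_{y}\cdot g(\omega),
\]
where $S\subset\zz^{d}\setminus\{0\}$ is a finite shield containing both the nearest and the next-to-nearest neighbors of the origin, and $g$ is a mean-zero observable supported on a region $T$ disjoint from $S\cup\{0\}$. The role of the shielding product is purely combinatorial: if the product is $1$ and $\omega_{0}=1$ then every site $x\in\{0\}\cup N(0)$ has all its neighbors occupied, so the constraint $c_{x}$ vanishes for such $x$. The Dirichlet contribution from $\{0\}\cup N(0)$ therefore vanishes identically. The contribution from $S\setminus N(0)$ is $O(q^{2})\var_{\mu}(g)$, because each such site's constraint requires an empty neighbor outside $S$ (probability $\sim q$), and the remote part factorises as $(1-q)^{|S|+1}\mathcal{D}(g)$. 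Combined with $\var_{\mu}(f)\approx(1-q)^{|S|+1}\var_{\mu}(g)$ this yields
\[
\frac{\mathcal{D}(f)}{\var_{\mu}(f)} \;\lesssim\; O(q^{2}) \;+\; \frac{\mathcal{D}(g)}{\var_{\mu}(g)}.
\]

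The construction reduces the problem to producing a remote observable $g$ whose own ratio is already of order $q^{2}$. Here one appeals to the finite-graph spectral gap result advertised later in the present note (as well as to \cite{HMT,PillaiSmith,PillaiSmith2}): on a suitable finite subgraph FA1f has gap comparable to $q^{2}$, and transplanting its slow eigenfunction into a large region inside $T$ (after centering) supplies the required $g$.

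The main obstacle I expect is the careful bookkeeping of the shielding. One has to verify rigorously that the geometric condition on $S$ kills \emph{all} $O(q)$ Dirichlet contributions from $\{0\}\cup N(0)$, and that the three-way decomposition of $\mathcal{D}(f)$ into local, shield, and remote pieces leaves no surviving cross-terms; any uncaught coupling between $g$ and a constraint $c_{x}$ with $x\in\{0\}\cup N(0)$ would reintroduce an unwanted $q$ term and destroy the bound.
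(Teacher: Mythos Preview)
Your proposal has a structural gap that makes it circular. You reduce the problem to ``producing a remote observable $g$ whose own ratio is already of order $q^{2}$'' --- but any such $g$ is a local function on $\zz^{d}$, and plugging it directly into the variational characterization gives $\gap(\mathcal{L})\le\mathcal{D}(g)/\var_{\mu}(g)\le Cq^{2}$ without any shielding. In other words, the shield construction around the origin adds nothing: your displayed inequality
\[
\frac{\mathcal{D}(f)}{\var_{\mu}(f)} \;\lesssim\; O(q^{2}) \;+\; \frac{\mathcal{D}(g)}{\var_{\mu}(g)}
\]
is only useful if the second term is already $O(q^{2})$, and in that case $g$ itself is the test function you want. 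The entire content of the proof therefore lies in the construction of $g$, which you do not give; you defer it to the finite-graph Theorem later in the paper, but that result lives on $\Omega_{G}$ with the \emph{conditioned} measure $\mu_{G}$, and transplanting its test function to $\zz^{d}$ (and verifying that $\overline{\tau}_{\meet}\sim q^{-1}$ on the right graph in $d\ge3$) is exactly the work that is missing.

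The paper's proof takes a completely different and self-contained route: the test function is
\[
f(\eta)=\#\{\text{connected clusters of empty sites in }\Lambda_{\ell}\},\qquad \ell=\lfloor 1/q\rfloor.
\]
A percolation argument of Cox--Grimmett type gives $\var(f)\ge Cq\ell^{d}$, while for the Dirichlet form one observes that flipping an interior site can change the cluster count only if it has at least \emph{two} empty neighbours (probability $O(q^{2})$), and boundary sites contribute only $O(q\ell^{d-1})$; hence $\mathcal{D}(f)\le Cq^{3-d}$. The ratio is then $Cq^{3-d}/(q\ell^{d})=Cq^{2}$. No auxiliary spectral-gap input is needed.
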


The second result presented here concerns with the persistence function.
Recall that 
\begin{align*}
F_{0}(t) & =\pp_{\mu}(\tau_{0}>t),\\
\tau_{0} & =\inf\{t:\text{origin is empty at time }t\}.
\end{align*}

In general, when the spectral gap is positive, $F_{0}(t)\le e^{-t/\overline{\tau}}$,
where $\overline{\tau}$ could be chosen to be equal $\frac{1}{\text{gap}\cdot q}$.
We will see that for the FA$1$f model on $\zz^{d}$ this choice is
not optimal, and that the typical time to empty the origin scales
(perhaps with lower order corrections) like the inverse of the spectral
gap, without the additional factor of $q$.
\begin{thm}
\label{thm:persistence}Consider the FA$1$f model. Then there exists
a positive constant $C$ such that 
\begin{alignat*}{2}
F_{0}(t) & \le e^{-Cq^{3}\,t}\qquad & d=1,\\
F_{0}(t) & \le e^{-C\frac{q^{2}}{\log(1/q)}\,t}\qquad & d=2,\\
F_{0}(t) & \le e^{-Cq^{2}\,t}\qquad & d\ge3.
\end{alignat*}
Moreover, for a different positive constant $C$, 
\begin{alignat*}{2}
\ee_{\mu}(\tau_{0}) & \ge Cq^{-3} & d=1,\\
\ee_{\mu}(\tau_{0}) & \ge Cq^{-2}\log(1/q)\qquad & d=2,\\
\ee_{\mu}(\tau_{0}) & \ge Cq^{-2} & d=3.
\end{alignat*}
\end{thm}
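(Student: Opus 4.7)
The plan is to route both parts of the theorem through the Dirichlet spectral problem on the absorbing set $A:=\{\eta_{0}=0\}$. Write $\mathcal{L}_{A}$ for the restriction of $\mathcal{L}$ to functions that vanish on $A$ and $\lambda_{A}$ for its smallest eigenvalue. The function $u_{t}(\eta):=\pp_{\eta}(\tau_{0}>t)$ satisfies $\partial_{t}u_{t}=\mathcal{L}_{A}u_{t}$ with $u_{t}|_{A}=0$, and $F_{0}(t)=\int u_{t}\,d\mu$. A standard $L^{2}$-contraction gives $\|u_{t}\|_{L^{2}(\mu)}\le e^{-\lambda_{A}t}\sqrt{1-q}$, and Cauchy--Schwarz then yields $F_{0}(t)\le(1-q)\,e^{-\lambda_{A}t}$. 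The first part of the theorem therefore reduces to proving $\lambda_{A}\gtrsim\gap(\mathcal{L})$ in every dimension, where $\gap(\mathcal{L})$ is the lower bound supplied by \cite{CMRT} in $d=1,2$ and by \cite{CMRT} together with Theorem~\ref{thm:gap} in $d\ge 3$.

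For the Dirichlet-gap estimate I would decompose any test function $f$ vanishing on $A$ as $f=a\,\One_{A^{c}}+h$ with $\mu(h)=0$ (so that $h$ is also supported on $A^{c}$), and split
\[
\mathcal{E}(f,f) = a^{2}\,\mathcal{E}(\One_{A^{c}},\One_{A^{c}}) + 2a\,\mathcal{E}(\One_{A^{c}},h) + \mathcal{E}(h,h).
\]
Only the transition at the origin contributes to the diagonal piece, for which direct computation gives $\mathcal{E}(\One_{A^{c}},\One_{A^{c}})=q(1-q)(1-(1-q)^{2d})$, which matches $\gap(\mathcal{L})$ in $d\ge 3$ (with the corresponding $\log(1/q)$ and $q$-power corrections in $d=2$, $d=1$) and is exactly the scale needed to dominate the $a^{2}$-part of $\|f\|_{L^{2}(\mu)}^{2}$. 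The mean-zero piece is controlled by the Poincar\'e inequality $\mathcal{E}(h,h)\ge\gap(\mathcal{L})\,\|h\|_{L^{2}(\mu)}^{2}$. The cross piece also receives only the $x=0$ contribution, and reduces to an integral of $c_{0}\cdot h$ against a product measure on $\zz^{d}\setminus\{0\}$; since $h|_{\eta_{0}=1}$ has vanishing mean, the constraint can be recentred as $c_{0}-\mu(c_{0})$ before Cauchy--Schwarz.

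For the lower bound on $\ee_{\mu}[\tau_{0}]$ I would use the classical variational identity
\[
\ee_{\mu}[\tau_{0}]\ge\sup_{f|_{A}=0}\frac{\bigl(\int f\,d\mu\bigr)^{2}}{\mathcal{E}(f,f)}.
\]
In $d=3$ the simple test function $f=\One_{A^{c}}$ already suffices: $\int f\,d\mu=1-q$ and the explicit formula above gives $\mathcal{E}(f,f)\lesssim q^{2}$, hence $\ee_{\mu}[\tau_{0}]\gtrsim q^{-2}$. For $d=2$ and $d=1$, where the bound carries the extra $\log(1/q)$ or $q^{-1}$ factor, a more elaborate test function is required; a natural candidate involves a suitably truncated functional of the graph distance from the origin to the nearest empty site, multiplied by $\One_{A^{c}}$. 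These are essentially the test functions that \cite{CMRT} use to saturate the spectral gap bound in low dimensions, recycled here for the Dirichlet problem.

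The main obstacle is the cross-term estimate in the first part. A direct Cauchy--Schwarz---either the bilinear version on the Dirichlet form, giving $|\mathcal{E}(\One_{A^{c}},h)|\le\sqrt{\mathcal{E}(\One_{A^{c}},\One_{A^{c}})\,\mathcal{E}(h,h)}\lesssim q\sqrt{\mathcal{E}(h,h)}$, or the integral form after recentring, giving $|\mathcal{E}(\One_{A^{c}},h)|\lesssim q^{3/2}\|h\|_{L^{2}(\mu)}$---is borderline too weak to be absorbed by AM--GM into the $\gap(\mathcal{L})\|h\|^{2}$ contribution, since $\gap(\mathcal{L})\asymp q^{2}$. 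Closing the estimate will likely require extracting extra smoothness of $h$ near the neighbors of the origin (for instance through a local Poincar\'e inequality on the boundary slab of $A$), or iterating the decomposition; this is where the bulk of the technical work is expected to lie.
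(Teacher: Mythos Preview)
Your lower-bound strategy is essentially the paper's: it uses the variational formula $\ee_{\mu}(\tau_{0})=\sup_{f\in V_{0}}\mu(f)^{2}/\mathcal{D}(f)$ with $f=\eta_{0}$ in $d\ge3$, and with a (truncated) functional of the distance to the nearest vacancy in $d=1,2$ (specifically, a tent of $\xi=\inf\{|x|:\eta_{x}=0\}$ in $d=1$ and $\log(1+\norm{x}_{1}\wedge\ell)$ in $d=2$). So that half is fine.

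The upper bound, however, is where your proposal has a genuine gap, and it is not a technicality. Your plan is to deduce the Dirichlet bound $\mu(f^{2})\le C\,\gap^{-1}\mathcal{D}(f)$ for $f|_{A}=0$ from the ordinary Poincar\'e inequality via the splitting $f=a\One_{A^{c}}+h$. But the only \emph{generic} bound one can get this way is $\mu(f^{2})\le(\mu(A)\,\gap)^{-1}\mathcal{D}(f)$, which is precisely the trivial estimate $\overline{\tau}=(q\,\gap)^{-1}$ that the theorem is trying to improve. You diagnose this yourself: both Cauchy--Schwarz routes on the cross term $\mathcal{E}(\One_{A^{c}},h)$ are off by exactly the factor needed to close the argument, and no ``local Poincar\'e on the boundary slab'' will recover that factor without essentially redoing the whole analysis. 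The obstruction is structural: the inequality $\lambda_{A}\gtrsim\gap$ is \emph{not} a consequence of knowing $\gap$ alone, so any proof must use the FA1f mechanism directly rather than as a black box.

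The paper proceeds quite differently. It proves $\mu(f^{2})\le\overline{\tau}\,\mathcal{D}(f)$ \emph{directly} by a canonical-path argument: on the event $\chi_{\ell}=\{\exists\,x\in\Lambda_{\ell},\ \eta(x)=0\}$, one routes a vacancy from the nearest empty site to the origin along a discrete approximation of the straight segment, and a weighted Cauchy--Schwarz along the path yields $\mu(\chi_{\ell}f^{2})\le\tau_{\ell}\,\mathcal{D}(f)$ with $\tau_{\ell}$ of the desired order (the weights $(\norm{x}_{1}+1)^{(d-1)/2}$ and a cone-counting estimate are what produce the dimension-dependent exponents). The complementary piece $\mu((1-\chi_{\ell})f^{2})$ is handled not by the Poincar\'e inequality for $\mathcal{L}$ but by the auxiliary long-range block dynamics of \cite{CMRT}, whose gap is order~$1$; this lets one bound $\mu((1-\chi_{\ell})f^{2})$ by $\tilde{\mathcal{D}}((1-\chi_{\ell})f)$ and then compare back to $\mathcal{D}(f)$. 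In short: the missing idea in your proposal is to abandon the spectral-gap decomposition entirely and instead run a path/comparison argument tailored to the event $\{\eta_{0}=0\}$.
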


The last theorem that will be presented is in a slightly different
setting -- the Fredrickson-Andersen model on a finite graph $G$.
A particularly interesting case, studied in \cite{PillaiSmith,PillaiSmith2}
and more recently in \cite{HMT}, is when $\left|V(G)\right|=cq^{-1}$
for some positive constant $c$, where $V(G)$ denotes the set of
vertices of $G$. A lower bound on the spectral gap is given in \cite{PillaiSmith,PillaiSmith2}
and later on in \cite{HMT} by suggesting a relaxation mechanism in
which vacancies travel as random walkers on $G$. The next theorem
will bound the spectral gap of this model from above, showing that
this mechanism has a leading contribution. Consider two independent
continuous time random walks on $G$, namely, each of the two walkers
moves to each neighboring site with rate $1$. For two vertices $x,y\in V(G)$,
we define $\tau_{\meet}(x,y)$ to be the expected time that it takes
for two such random walkers starting at $x$ and $y$ to reach distance
at most $1$. Let $\overline{\tau}_{\meet}$ be its expected value,
starting at two random positions, i.e., $\overline{\tau}_{\meet}=\left|V(G)\right|^{-2}\sum_{x,y\in V(G)}\tau_{\meet}(x,y)$.
\begin{thm}
\label{thm:finitegraph}Consider the FA$1$f model on a finite graph
$G$, and assume that $\left|V(G)\right|=c/q$ for some positive constant
$c$. Let $\gap(\mathcal{L}_{G})$ denote its spectral gap with respect
to the product measure conditioned on having at least one vacancy.
Then
\[
\gap(\mathcal{L}_{G})\le\frac{Cq}{\overline{\tau}_{\meet}}
\]
for some positive constant $C$.
\end{thm}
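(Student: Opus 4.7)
The plan is to apply the variational characterisation
\[
\gap(\mathcal{L}_{G}) \;\le\; \frac{\mathcal{D}(f)}{\var_{\mu}(f)},
\qquad
\mathcal{D}(f) = \tfrac{1}{2}\sum_{z \in V(G)}\mu\big(c_{z}\,\var_{z}(f)\big),
\]
to a suitable test function. The heuristic guiding the construction is the mechanism used for the lower bounds in \cite{PillaiSmith,PillaiSmith2,HMT}: with $|V(G)|=c/q$, a typical configuration supports about $c$ vacancies which behave collectively as walkers on $G$ at an effective rate proportional to $q$, and the configuration cannot rearrange until two walkers meet, an event taking time $\overline{\tau}_{\meet}$ at unit rate, hence $\overline{\tau}_{\meet}/q$ under the FA$1$f dynamics. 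A naive linear test function $f(\eta)=\sum_{x}\phi(x)(1-\eta_{x})$ only gives $\mathcal{D}(f)/\var_{\mu}(f)=O(q)$, because $\var_{z}(f)=\phi(z)^{2}q(1-q)$ and $\mu(c_{z})=O(q)$, so the ratio collapses to an expression independent of $\phi$ and the geometry of $G$. A more refined, nonlinear choice is therefore needed.

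The intended candidate is built from the pair meeting time function,
\[
f(\eta) \;=\; \sum_{x,y \in V(G)} \tau_{\meet}(x,y)\,(1-\eta_{x})(1-\eta_{y}),
\]
orthogonalised against its lower-order components. The crucial algebraic input is that $\tau_{\meet}(x,y)=0$ when $d(x,y)\le 1$: the local quantity $h_{z}(\eta):=\sum_{y}\tau_{\meet}(z,y)(1-\eta_{y})$ then depends only on occupations at distance $\ge 2$ from $z$, so it is independent of the constraint $c_{z}$, giving the clean factorisation $\mu(c_{z}h_{z}^{2})=\mu(c_{z})\,\mu(h_{z}^{2})$ in $\mathcal{D}(f)$. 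The Poisson equation $L_{\text{pair}}\tau_{\meet}(x,y)=-1$ for $d(x,y)\ge 2$, where $L_{\text{pair}}$ is the generator of two independent simple random walks on $G$, then lets one relate the sum $\sum_{y}\tau_{\meet}(z,y)$ to $|V(G)|\overline{\tau}_{\meet}$ and produce the factor $\overline{\tau}_{\meet}$ in the denominator of the Rayleigh quotient.

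The hardest part is the bookkeeping: a direct evaluation of the Rayleigh quotient for this $f$ still produces only $O(q)$, because the leading contributions to both $\var_{\mu}(f)$ and $\mathcal{D}(f)$ are each driven by $(\sum_{y}\tau_{\meet}(z,y))^{2}$ and therefore cancel in the ratio. The improvement by $\overline{\tau}_{\meet}^{-1}$ must come from the genuinely second-order part of $f$ in the chaos of the centred variables $\xi_{x}=1-\eta_{x}-q$, which has to be isolated by subtracting an explicit linear functional of $\eta$ before taking the quotient. Should this projection prove too delicate to carry out directly, the fallback strategy is to compare the FA$1$f Dirichlet form with that of an auxiliary coalescing-random-walk process on $G$, whose spectral gap is of order $\overline{\tau}_{\meet}^{-1}$, with the factor $q$ appearing as the comparison constant between the two Dirichlet forms; this comparison is natural because, modulo the rate $q$ at which vacancies move, the FA$1$f dynamics restricted to a typical sparse configuration is essentially that of coalescing walks on $G$.
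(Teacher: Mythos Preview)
Your proposal does not close, and the gap is exactly where you flag it as ``delicate'': the second--order chaos test function you end up with,
\[
g(\eta)=\sum_{x,y}\tau_{\meet}(x,y)\,\xi_{x}\xi_{y},\qquad \xi_{x}=1-\eta_{x}-q,
\]
still gives only $\mathcal{D}(g)/\var(g)=O(q)$. Indeed, flipping $z$ changes $g$ by $\pm 2\sum_{y}\tau_{\meet}(z,y)\xi_{y}$, and using the independence of $c_{z}$ from this sum (as you correctly observe) one gets
\[
\mathcal{D}(g)\;\le\; Cq\cdot q^{2}(1-q)^{2}\sum_{z,y}\tau_{\meet}(z,y)^{2},
\qquad
\var(g)\;=\;2q^{2}(1-q)^{2}\sum_{x,y}\tau_{\meet}(x,y)^{2},
\]
so the quantity $\sum\tau_{\meet}^{2}$ cancels in the ratio and no factor $\overline{\tau}_{\meet}^{-1}$ ever appears. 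Subtracting a linear functional cannot help: $g$ is already the pure second chaos, orthogonal to every linear functional, so there is nothing left to project out. The Poisson equation for $\tau_{\meet}$ plays no role in this computation because $\var_{z}(g)$ is governed by the \emph{values} $\tau_{\meet}(z,y)$, not by their discrete gradients. Your fallback (Dirichlet comparison with coalescing walks) would need an inequality of the form $\mathcal{D}_{G}(f)\le Cq\,\mathcal{D}_{\mathrm{CRW}}(f)$, which is the opposite direction to what path/comparison arguments typically yield, and you give no mechanism for it.

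The missing idea is to take the \emph{maximum} rather than the sum:
\[
f(\eta)=\max_{\substack{x,y\in V(G)\\ \eta(x)=\eta(y)=0}}\tau_{\meet}(x,y).
\]
Now if $c_{x}(\eta)=1$, $\eta(x)=0$ and the maximum is attained at $(x,y)$, then $x$ has an empty neighbour $x'$, and after removing the vacancy at $x$ both $x'$ and $y$ remain empty; hence $f(\eta^{x})\ge\tau_{\meet}(x',y)$ and
\[
\bigl(f(\eta)-f(\eta^{x})\bigr)^{2}\le\sum_{x'\sim x}\sum_{y}(1-\eta(x'))(1-\eta(y))\bigl(\tau_{\meet}(x,y)-\tau_{\meet}(x',y)\bigr)^{2}.
\]
This is a \emph{gradient} of $\tau_{\meet}$, so summing and using $|V(G)|=c/q$ gives $\mathcal{D}_{G}(f)\le Cq\,\mathcal{D}_{\mathrm{RW}}(\tau_{\meet})=Cq\,\overline{\tau}_{\meet}$ by the Dirichlet identity $\overline{\tau}_{\meet}=\mathcal{D}_{\mathrm{RW}}(\tau_{\meet})$ coming from the Poisson problem. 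For the variance, the events $\{$exactly one vacancy$\}$ and $\{$exactly two vacancies$\}$ each have probability bounded away from $0$; on the first $f=0$ while on the second $\mu_{G}(f\mid\cdot)\ge\overline{\tau}_{\meet}$, so $\var(f)\ge C\overline{\tau}_{\meet}^{2}$ and the ratio is $Cq/\overline{\tau}_{\meet}$.
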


\begin{rem}
\label{rem:finitegraph}\cite{HMT} give a lower bound on the spectral
gap, which in various graphs is of the same order as the upper bound
given in Theorem \ref{thm:finitegraph}. In particular, on the two
dimensional torus $\mathbb{T}^{2}=\zz^{2}/\ell\zz^{2}$, $\ell=cq^{-1/2}$,
both the upper and lower bounds scale like $q^{2}/\log(1/q)$. In
view of this scaling, and the relaxation mechanism reflected in its
proof, it seems that the correct scaling of the spectral gap is $q^{2}/\log(1/q)$
also in $\zz^{2}$, coinciding with the lower bound of \cite{CMRT}.
Unfortunately, the ideas in the proof of Theorem \ref{thm:finitegraph}
do not seem to be easily adapted for the model on $\zz^{2}$, and
the problem remains open.
\end{rem}

\section{Notation}

We will now recall some of the notation in \cite{CMRT} that will
be used in this note.
\begin{itemize}
\item For $\ell>0$, $\Lambda_{\ell}=\{0,\dots,\ell-1\}^{d}$ and $\zz^{d}(\ell)=\ell\zz^{d}$.
\item For $\ell>0$ and $x\in\zz^{d}(\ell)$ we denote $\Lambda_{x}=x+\Lambda_{\ell}$
(where the $\ell$-dependence of $\Lambda_{x}$ is implicit).
\item The configuration space is $\Omega=\{0,1\}^{\zz^{d}}$, and the measure
$\mu$ on this space is a product measure of Bernoulli random variables
with parameter $1-q$.
\item For $\eta\in\Omega$ and $x\in\zz^{d}$, the configuration which equals
$\eta$ outside $x$ and different from $\eta$ at $x$ is denoted
$\eta^{x}$.
\item The FA$1$f critical length is denoted $\ell_{q}=\left(\frac{\log(1-q_{0})}{\log(1-q)}\right)^{1/d}\approx Cq^{-1/d}$,
where $q_{0}\in(0,1)$ is given in \cite[Theorem 4.1]{CMRT}, and
does not depend on $q$.
\item The constraint of the FA$1$f dynamics, for a configuration $\eta\in\Omega=\{0,1\}^{\zz^{d}}$
and $x\in\zz^{d}$, is 
\[
c_{x}(\eta)=\begin{cases}
1 & \exists y\text{ such that }\norm{y-x}_{1}=1\text{ and }\eta(y)=0,\\
0 & \text{otherwise}.
\end{cases}
\]
\item The Dirichlet form of FA$1$f operating on a local function $f:\Omega\rightarrow\rr$
is given by
\begin{align*}
\mathcal{D}(f) & =\sum_{x\in\zz^{d}}\mu\left(c_{x}\var_{x}(f)\right)=q(1-q)\sum_{x\in\zz^{d}}\mu\left(c_{x}(f(\eta)-f(\eta^{x}))^{2}\right).
\end{align*}
\end{itemize}
For the FA$1$f model on a finite graph $G$ with vertex set $V(G)$
denote $\Omega_{G}=\{0,1\}^{V(G)}$; and $\mu_{G}$ the product measure
of Bernoulli random variables with parameter $1-q$, conditioned on
having at least one empty site. The constraint is defined in the same
manner as in $\zz^{d}$, except that $\norm{y-x}_{1}$ should be understood
as the graph distance between $x$ and $y$, that we denote $d(x,y)$.
The Dirichlet form operating on $f:\Omega_{G}\rightarrow\rr$ is given
by 
\[
\mathcal{D}_{G}(f)=q(1-q)\sum_{x\in V(G)}\mu_{G}\left(c_{x}(f(\eta)-f(\eta^{x}))^{2}\right).
\]

Throughout the proof $C$ will denote a generic positive constant,
and $q$ is assumed to be small enough.

\section{Proof of Theorem \ref{thm:gap}}

In order to bound the spectral gap from above, we need to find an
appropriate test function $f$, such that 
\[
\mathcal{D}(f)\le Cq^{2}\,\var(f).
\]

Consider the box $\Lambda=\Lambda_{\ell}$ for $\ell=\floor{1/q}$.
For a configuration $\eta$ and a site $x\in\Lambda$, the \emph{connected
cluster of $x$}, denoted $\mathcal{C}_{x}(\eta)$, is defined as
the set of sites $y\in\Lambda$ that are connected to $x$ via a path
of empty sites in $\Lambda$. If $\eta(x)=1$, its connected cluster
is the empty set. This way, the set of empty sites in $\Lambda$ is
partitioned in connected clusters, and we define: 
\begin{equation}
f(\eta)=\#\text{connected clusters in }\Lambda.\label{eq:gap_testfunction}
\end{equation}

\begin{prop}
For the test function $f$ defined in equation (\ref{eq:gap_testfunction}),
\begin{equation}
\var(f)\ge C\,q\,\ell^{d}.\label{eq:bigvariance}
\end{equation}
\end{prop}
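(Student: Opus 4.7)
The plan is to use a one-variable projection / ANOVA lower bound on the variance: since $f$ depends only on independent Bernoulli variables $\{\eta(x):x\in\Lambda\}$, we have
\[
\var(f)\;\ge\;\sum_{x\in\Lambda}\var\bigl(\ee[f\mid\eta(x)]\bigr),
\]
which follows from the orthogonality of the Hoeffding/Efron--Stein components (for $x\neq y$, $\ee[f\mid\eta(x)]$ and $\ee[f\mid\eta(y)]$ are independent, so their centered versions are uncorrelated, and projecting $f-\ee f$ onto the degree-$1$ subspace $\sum_{x}(\ee[f\mid\eta(x)]-\ee f)$ and applying Cauchy--Schwarz yields the above). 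So it suffices to show that for every $x\in\Lambda$ and every $q$ small enough, $\var(\ee[f\mid\eta(x)])\ge Cq$.

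The key computation is to compare the two conditional expectations. Let $f_{-x}(\eta)$ denote the number of connected empty-clusters in $\Lambda\setminus\{x\}$, which depends only on $\eta(y)$ for $y\neq x$, and let $N_x(\eta)$ denote the number of distinct such clusters that contain at least one neighbor of $x$ in $\Lambda$. I would verify the deterministic identity
\[
f(\eta)\;=\;f_{-x}(\eta)+\bigl(1-N_x(\eta)\bigr)\One_{\eta(x)=0},
\]
since an empty $x$ forms a new singleton cluster when $N_x=0$ and otherwise merges $N_x$ adjacent clusters into one, changing the count by $1-N_x$. Taking expectations and subtracting gives
\[
\ee[f\mid\eta(x)=0]-\ee[f\mid\eta(x)=1]\;=\;1-\ee[N_x].
\]

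Next I would control $\ee[N_x]$. Since $N_x$ is bounded above by the number of empty neighbors of $x$ inside $\Lambda$, and $x$ has at most $2d$ neighbors, $\ee[N_x]\le 2dq\le 1/2$ for $q$ small enough. Consequently
\[
\var\bigl(\ee[f\mid\eta(x)]\bigr)\;=\;q(1-q)\bigl(1-\ee[N_x]\bigr)^{2}\;\ge\;\tfrac{1}{8}\,q,
\]
and summing over the $\ell^{d}$ sites of $\Lambda$ gives $\var(f)\ge Cq\ell^{d}$, as desired.

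The step that requires the most care is justifying the projection inequality cleanly (or equivalently citing the Efron--Stein decomposition for independent Bernoullis), together with the deterministic cluster-counting identity for $f$; both are elementary but must be set up carefully so that the bound $\ee[N_x]\le 2dq$ is available uniformly in $x$, including for boundary sites where $x$ has fewer neighbors (which only makes the bound smaller).
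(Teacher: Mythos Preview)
Your argument is correct and complete: the Hoeffding/Efron--Stein lower bound
\[
\var(f)\ge\sum_{x\in\Lambda}\var\bigl(\ee[f\mid\eta(x)]\bigr)
\]
holds for any function of independent coordinates (your Cauchy--Schwarz justification is fine), the cluster-counting identity $f=f_{-x}+(1-N_x)\One_{\eta(x)=0}$ is valid, and the bound $\ee[N_x]\le 2dq$ then gives $\var(\ee[f\mid\eta(x)])\ge Cq$ uniformly in $x$.

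The paper proceeds differently, following Cox and Grimmett. It singles out the sublattice $G=3\zz^{d}\cap\Lambda$ and conditions on the event $\{\chi_A=1\}$ that the set of sites of $G$ whose $2d$ neighbors are all occupied is exactly $A$. On this event one may write $f=f_A+n_A$ with $n_A=\sum_{x\in A}(1-\eta(x))$; the point of the spacing by $3$ is that, conditionally on $\{\chi_A=1\}$, the count $n_A$ is independent of $f_A$, so the cross term drops and one recovers $\var(f)\ge\sum_A\mu(\chi_A)\var(n_A)=q(1-q)^{2d+1}|G|$. Thus the paper isolates a set of sites that are \emph{forced} to be singleton clusters and harvests their binomial variance directly, whereas you project onto the full degree-$1$ subspace and handle the possible merging by the deterministic bound $N_x\le\#\{\text{empty neighbors}\}$. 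Your route is shorter and avoids the sublattice bookkeeping; the Cox--Grimmett route is more hands-on and gives an explicit constant without appealing to a general decomposition inequality. Both exploit the same heuristic: for small $q$ most empty sites are isolated, so the cluster count fluctuates like a binomial.
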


\begin{proof}
This result is shown in \cite{CoxGrimmett} for the case of Bernoulli
bond percolation. We will repeat their argument applied to our case
for completeness.

First, note that we may write
\[
f(\eta)=\sum_{x\in\Lambda}\frac{1-\eta(x)}{|\mathcal{C}_{x}(\eta)|},
\]
where, when $\eta(x)=1$ (and therefore $\mathcal{C}_{x}(\eta)=\emptyset$),
we define $\frac{1-\eta(x)}{|\mathcal{C}_{x}(\eta)|}=0$.

Let $G=3\zz^{d}\cap\Lambda$, and for $A\subseteq G$, define $\chi_{A}(\eta)$
to be the indicator of the event, that the set $\{x\in\Lambda:\eta(y)=1\,\,\forall y\text{ such that }\norm{y-x}_{1}=1\}$
is equal $A$. Note that $\mu(\chi_{A})=(1-q)^{2d|A|}\cdot\left(1-(1-q)^{2d}\right)^{|G|-|A|}$.
For such a set $A$, let $D(A)=\{y\in\Lambda:\norm{y-x}_{1}=1\text{ for some }x\in A\}$,
and define 
\[
f_{A}(\eta)=\sum_{x\in\Lambda\setminus D(A)}\frac{1-\eta(x)}{\left|\mathcal{C}_{x}(\eta)\right|}.
\]

When $\eta$ is such that $\chi_{A}(\eta)=1$, 
\[
f(\eta)=f_{A}(\eta)+\sum_{x\in A}(1-\eta(x))\eqqcolon f_{A}(\eta)+n_{A}(\eta).
\]
In order to use this identity, we split the variance over the different
choices of $A$: 
\[
\var(f)=\sum_{A\subseteq G}\mu\left((f-\mu(f))^{2}\chi_{A}\right).
\]
Consider one of the summands in the above expression -- 
\begin{multline*}
\mu\left((f-\mu(f))^{2}\chi_{A}\right)=\mu\left((f_{A}-(\mu(f)+\mu(n_{A}))+n_{A}-\mu(n_{A}))^{2}\chi_{A}\right)\\
=\mu\left((f_{A}-(\mu(f)+\mu(n_{A}))^{2}\chi_{A}\right)+\mu\left((n_{A}-\mu(n_{A}))^{2}\chi_{A}\right)\\
+\mu\left((f_{A}-(\mu(f)+\mu(n_{A}))(n_{A}-\mu(n_{A}))\chi_{A}\right).
\end{multline*}
The first term is positive, and we will simply bound it by $0$. In
order to find the second term, we note that the variables $n_{A}$
and $\chi_{A}$ are independent, and therefore 
\[
\mu\left((n_{A}-\mu(n_{A}))^{2}\chi_{A}\right)=\mu(\chi_{A})\var(n_{A})=(1-q)^{2d|A|}\cdot\left(1-(1-q)^{2d}\right)^{|G|-|A|}\cdot\left|A\right|q(1-q).
\]
Finally, since under the event $\{\chi_{A}=1\}$ the variables $f_{A}$
and $n_{A}$ are independent, the third term vanishes. Therefore,
\[
\var(f)\ge\sum_{A\subseteq G}(1-q)^{2d|A|}\cdot\left(1-(1-q)^{2d}\right)^{|G|-|A|}\cdot\left|A\right|q(1-q)=q(1-q)^{2d+1}\left|G\right|.
\]
This establishes inequality (\ref{eq:bigvariance}).
\end{proof}
\begin{prop}
For the test function $f$ defined in equation (\ref{eq:gap_testfunction}),
\begin{equation}
\mathcal{D}(f)\le Cq^{3-d}.\label{eq:smalldirichlet}
\end{equation}
\end{prop}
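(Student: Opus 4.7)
The plan is to bound each term in the Dirichlet form $\mathcal{D}(f) = q(1-q)\sum_x \mu\bigl(c_x (f(\eta) - f(\eta^x))^2\bigr)$ by analysing the change of $f$ under a single flip. Since $f$ depends only on $\eta|_{\Lambda}$, only sites $x \in \Lambda$ contribute. For each such $x$, let $m = m(\eta,x)$ be the number of distinct clusters (in whichever of $\eta, \eta^{x}$ has $x$ occupied) that contain at least one empty $\Lambda$-neighbor of $x$. A direct check shows
\[
f(\eta) - f(\eta^{x}) = \pm (m - 1),
\]
because turning $x$ from occupied to empty merges the $m$ adjacent clusters together with $\{x\}$ into a single new cluster, and the opposite flip undoes this. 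In particular $(f(\eta) - f(\eta^{x}))^{2} = (m - 1)^{2} \le (2d - 1)^{2}$, and the change vanishes exactly when $m = 1$.

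I would then identify the two configurations in which this contribution is nonzero, both rare under $\mu$. If $m \ge 2$, then $x$ has at least two empty neighbors in $\Lambda$, an event of probability at most $\binom{2d}{2} q^{2}$. If $m = 0$ but $c_{x}(\eta) = 1$, then $x$ has no empty neighbor inside $\Lambda$ yet at least one empty neighbor in $\zz^{d}$; this forces $x$ to lie on the boundary of $\Lambda$, and its probability is at most $2d\,q$.

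Putting these bounds together splits $\mathcal{D}(f)$ into a bulk contribution, of order $q \cdot |\Lambda| \cdot q^{2} = q^{3}\ell^{d}$, and a boundary contribution, of order $q \cdot \ell^{d-1} \cdot q = q^{2}\ell^{d-1}$. With $\ell = \floor{1/q}$, both are of order $q^{3-d}$, which is the claimed bound. The main subtlety is the boundary: at this particular $\ell$, the boundary contribution coincides in order with the bulk and cannot be dismissed as a lower-order correction. This is not accidental, since the choice $\ell \sim 1/q$ is precisely the threshold that balances the variance estimate of Proposition \textup{3.1} against the Dirichlet form, so the bulk and boundary terms must both be tracked carefully.
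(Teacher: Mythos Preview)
Your proof is correct and follows essentially the same approach as the paper: split the sites into interior and boundary, use that an interior flip changes $f$ only if $x$ has at least two empty neighbors (cost $q^{2}$ per site), and bound boundary sites crudely by $c_{x}$ (cost $q$ per site). Your use of the exact identity $|f(\eta)-f(\eta^{x})| = |m-1|$ is a slight refinement over the paper's blunt bound $|f(\eta)-f(\eta^{x})|\le 2d$, and your decomposition by the value of $m$ rather than by interior/boundary is marginally cleaner, but the substance of the argument is the same.
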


\begin{proof}
Recall first that 
\[
\mathcal{D}(f)=q(1-q)\sum_{x\in\zz^{d}}\mu\left(c_{x}(\eta)\left(f(\eta^{x})-f(\eta)\right)^{2}\right).
\]
Consider a single term in that sum. First, note that by flipping a
single site $f$ could change by at most $2d$. If $x$ is outside
$\Lambda$, flipping it could not change the number of clusters in
$\Lambda$ and its contribution would be $0$. If $x$ is on the boundary
of $\Lambda$ (i.e., it is in $\Lambda$ and has a neighbor outside
$\Lambda$), then
\[
\mu\left(c_{x}(\eta)\left(f(\eta^{x})-f(\eta)\right)^{2}\right)\le2d\mu\left(c_{x}(\eta)\right)\le C\,q.
\]
Finally, if $x$ is in $\Lambda$ but has no neighbors outside $\Lambda$,
the number of open clusters could only change if it has at least two
empty neighbors -- 
\[
\mu\left(c_{x}(\eta)\left(f(\eta^{x})-f(\eta)\right)^{2}\right)\le2d\,\mu(\One_{x\text{ has at least }2\text{ empty neighbors}})\le C\,q^{2}.
\]
The proof is now concluded by summing these options -- 
\[
\sum_{x\in\zz^{d}}\mu\left(c_{x}(\eta)\left(f(\eta^{x})-f(\eta)\right)^{2}\right)\le C\ell^{d-1}q+C\ell^{d}q^{2}=Cq^{-d+2}.\qedhere
\]
\end{proof}
Theorem \ref{thm:gap} follows from equations (\ref{eq:bigvariance})
and (\ref{eq:smalldirichlet}), together with the variational characterization
of the spectral gap. \qed 

\section{Proof of Theorem \ref{thm:persistence}}

\subsection{Upper bound}

The basic tool for the proof of the upper bounds on $F_{0}(t)$ is
the following result of \cite{AdP} (see also \cite[Section 4]{RandomConstraint}):
\begin{lem}
\label{lem:AdP}Assume that, for some $\overline{\tau}>0$ and any
local function $f$ which vanishes on the event $\{\eta_{0}=0\}$,
\begin{equation}
\mu(f^{2})\le\overline{\tau}\mathcal{D}(f).\label{eq:AdP}
\end{equation}
Then $F_{0}(t)\le e^{-t/\overline{\tau}}$.
\end{lem}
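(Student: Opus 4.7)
The plan is to view $F_0(t)$ through the semigroup of the FA$1$f dynamics killed at the first time the origin becomes empty. For $f:\Omega\to\rr$ vanishing on $\{\eta_0=0\}$, set $P_t^D f(\eta)\coloneqq \ee_\eta[f(\eta_t)\One_{\tau_0>t}]$. Taking $f=\One_{\{\eta_0=1\}}$, the function $u_t\coloneqq P_t^D f$ equals $\pp_\eta(\tau_0>t)$ (using that $\tau_0=0$ whenever $\eta_0=0$), so $F_0(t)=\mu(u_t)$ after averaging over the initial state.

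First I would identify the Dirichlet form of the killed process with the original $\mathcal{D}$. Let $H\coloneqq\{g\in L^2(\mu):g\equiv0\text{ on }\{\eta_0=0\}\}$ and write $\mathcal{L}^D$ for the generator of $P_t^D$. Since $f\in H$, multiplication by $f$ annihilates any contribution from $\{\eta_0=0\}$, and reversibility of the unkilled dynamics yields
\[
-\mu(f\mathcal{L}^D f)=-\mu(f\mathcal{L}f)=\mathcal{D}(f).
\]
Consequently $P_t^D$ is a self-adjoint contraction semigroup on $H$ with quadratic form $\mathcal{D}$, and hypothesis (\ref{eq:AdP}) reads as a Poincar\'e-type $L^2$-coercivity $\mu(f^2)\le\overline{\tau}\,\mathcal{D}(f)$ on $H$ --- an honest $L^2$-bound rather than a variance bound, because constants do not lie in $H$.

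Next I would apply the standard reversible-semigroup argument. Differentiation gives
\[
\frac{d}{dt}\mu\bigl((P_t^D f)^2\bigr)=-2\mathcal{D}(P_t^D f)\le-\tfrac{2}{\overline{\tau}}\mu\bigl((P_t^D f)^2\bigr),
\]
and Gr\"onwall yields the $L^2$-contraction $\|P_t^D f\|_{L^2(\mu)}\le e^{-t/\overline{\tau}}\|f\|_{L^2(\mu)}$. Applied to $f=\One_{\{\eta_0=1\}}$ (for which $\|f\|_2=\sqrt{1-q}\le 1$) and combined with Cauchy--Schwarz, this yields
\[
F_0(t)=\mu(u_t)\le\|u_t\|_{L^2(\mu)}\le e^{-t/\overline{\tau}}.
\]
The only genuinely delicate point is the identification of the killed Dirichlet form with $\mathcal{D}$ and the attendant self-adjointness of $P_t^D$ on $H$ --- both following from reversibility of the underlying dynamics and the time-reversal symmetry of the event $\{\tau_0>t\}$; the remainder is classical spectral theory for reversible Markov semigroups.
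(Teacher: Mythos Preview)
The paper does not prove this lemma; it is quoted as a known result from \cite{AdP} (with a pointer to \cite[Section 4]{RandomConstraint}), so there is no in-paper proof to compare against. Your argument is the standard one behind those references: pass to the sub-Markovian semigroup $P_t^D$ obtained by killing at $\{\eta_0=0\}$, identify its Dirichlet form on $H=\{f:f|_{\{\eta_0=0\}}=0\}$ with $\mathcal{D}$ via reversibility, use the hypothesis as a Poincar\'e inequality on $H$ to get exponential $L^2$-decay, and finish with Cauchy--Schwarz. This is correct and is exactly the mechanism those references provide.

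One small technical point worth tightening: the hypothesis~\eqref{eq:AdP} is stated only for \emph{local} $f$, whereas you apply it to $P_t^D f$, which is not local. You should remark that local functions vanishing on $\{\eta_0=0\}$ are dense in the form domain restricted to $H$, so the inequality extends by approximation; this is routine but not entirely automatic. With that caveat, the proof is complete.
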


We will use a path argument, similar to \cite[proof of Proposition 6.6]{CMRT},
proving inequality (\ref{eq:AdP}) with the appropriate $\overline{\tau}$.

We start by defining a canonical geometric path, which is a discrete
approximation of a straight segment. More precisely, for any $z\in\zz^{d}$,
we will construct a nearest neighbor path $\gamma(z)=(\gamma_{0}(z),\dots,\gamma_{n}(z))$
with $\gamma_{0}(z)=0$ and $\gamma_{n}(z)=z$ whose distance from
the line segment $[0,z]\in\rr^{d}$ is small. The exact definition
is rather cumbersome, and a reader who accepts that such a path could
be constructed satisfying Observation \ref{obs:canonicalpath_norm}
and Claim \ref{claim:conesize} (see Definition \ref{def:cone}) may
skip the technicalities involved in their proofs.
\begin{defn}
\label{def:geometricpath}Fix $z=(z_{1},\dots,z_{d})\in\zz^{d}$ with
$\norm z_{1}=n$. The \emph{canonical geometric path connecting $z$
to the origin} is the path $\gamma(z)=(\gamma_{0}(z),\dots,\gamma_{n}(z))$
constructed as follows -- consider the set $S\subseteq(0,1]\times\{1,\dots,d\}$
defined as 
\[
S=\left\{ (s,\alpha):sz_{\alpha}\in\zz\right\} .
\]
For each $\alpha$ there are $z_{\alpha}$ values of $s$ for which
$sz_{\alpha}\in\zz$, hence $|S|=n$. We will order $S$ according
to the lexicographic order, $(s_{1},\alpha_{1})<\dots<(s_{n},\alpha_{n})$,
so that $s_{i}\le s_{i+1}$ for all $i$, and in case of equality
$\alpha_{i}<\alpha_{i+1}$. Then 
\begin{align*}
\gamma_{0}(z) & =0,\\
\gamma_{i}(z) & =\gamma_{i-1}(z)+\vec{e}_{\alpha_{i}}\quad i\ge1.
\end{align*}
\end{defn}

\begin{observation}
\label{obs:canonicalpath_norm}Fix $z=(z_{1},\dots,z_{d})\in\zz^{d}$
and $0\le i\le\norm z_{1}$. Then $\norm{\gamma_{i}(z)}_{1}=i$, i.e.,
the sites of the path are indexed by their norm.
\end{observation}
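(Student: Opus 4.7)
The plan is built on the fact that each step of the path is a unit coordinate vector, so $\norm{\gamma_i - \gamma_{i-1}}_1 = 1$; the triangle inequality immediately yields $\norm{\gamma_i(z)}_1 \le i$, and the content of the observation is that this bound is saturated, i.e.\ no two steps partially cancel. I would prove this by arguing that all increments in a given coordinate direction $\alpha$ carry the same sign, namely $\operatorname{sgn}(z_\alpha)$. This is built into Definition \ref{def:geometricpath}: the pairs in $S$ with second entry $\alpha$ are exactly $\{(k/z_\alpha,\alpha):k\in\zz,\,0<k/z_\alpha\le1\}$, of which there are $|z_\alpha|$, and the lexicographic ordering processes the values of $s$ in increasing order, so the $\alpha$-th coordinate of $\gamma_i$ is advanced monotonically from $0$ toward $z_\alpha$. (The displayed recursion $\gamma_i=\gamma_{i-1}+\vec e_{\alpha_i}$ must therefore be read with $\vec e_\alpha$ carrying the sign of $z_\alpha$, since otherwise the path could not even reach $z$ when $z$ has negative entries; I would state this sign convention explicitly at the start of the proof.)

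With that in hand, introduce the counting function $n_\alpha(i)=\#\{j\le i:\alpha_j=\alpha\}$, so that $\sum_\alpha n_\alpha(i)=i$ by construction. A one-line induction on $i$ gives $(\gamma_i(z))_\alpha=\operatorname{sgn}(z_\alpha)\,n_\alpha(i)$ for every coordinate $\alpha$, since each step in direction $\alpha$ changes $(\gamma_\cdot)_\alpha$ by $\operatorname{sgn}(z_\alpha)$ and steps in other directions leave it fixed. Taking absolute values and summing,
\[
\norm{\gamma_i(z)}_1 \;=\; \sum_{\alpha=1}^{d}\bigl|(\gamma_i(z))_\alpha\bigr| \;=\; \sum_{\alpha=1}^{d} n_\alpha(i) \;=\; i,
\]
which is exactly the claim. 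The only real obstacle is the notational one of fixing the sign convention in Definition \ref{def:geometricpath}; once that is done, the argument is essentially bookkeeping, with no further technical content.
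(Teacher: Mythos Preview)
Your argument is correct. The paper itself gives no proof of this observation --- it is stated without justification, presumably because once the construction is understood the claim is immediate: each step adds a single signed unit vector, all steps in a fixed coordinate direction share the same sign, and hence the $\ell^1$-norm increases by exactly $1$ at every step. Your write-up makes this explicit via the counting function $n_\alpha(i)$, which is a clean way to record the bookkeeping.

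Your remark about the sign convention is well taken: as written, Definition~\ref{def:geometricpath} has $\gamma_i=\gamma_{i-1}+\vec e_{\alpha_i}$, which only reaches $z$ when all coordinates of $z$ are nonnegative. In the paper this causes no trouble, since the canonical path is only ever invoked for $z\in\Lambda_\ell=\{0,\dots,\ell-1\}^d$ (see Claims~\ref{claim:conesize} and~\ref{claim:thepath}), but you are right that a general statement for $z\in\zz^d$ requires the signed convention you describe.
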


\begin{claim}
\label{claim:geometricpath}Fix $z=(z_{1},\dots,z_{d})\in\zz^{d}$
with $\norm z_{1}=n$, and let $(s_{1},\alpha_{1})<\dots<(s_{n},\alpha_{n})$
be as in Definition \ref{def:geometricpath}. Then for all $1\le i\le n$,
\[
\gamma_{i}(z)=\floor{s_{i}z}^{{\scriptscriptstyle (\alpha_{i})}},
\]
where $\floor y^{{\scriptscriptstyle (\alpha)}}$, for $y=(y_{1},\dots,y_{d})\in\rr^{d}$,
is defined as 
\[
\floor y^{{\scriptscriptstyle (\alpha)}}=(\floor{y_{1}},\dots,\floor{y_{\alpha}},\ceil{y_{\alpha+1}}-1,\dots\ceil{y_{d}}-1).
\]
\end{claim}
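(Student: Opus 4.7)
The plan is to prove the formula by a direct counting argument rather than by induction on $i$. Since the path is built step by step via $\gamma_i(z)=\gamma_{i-1}(z)+\vec e_{\alpha_i}$, we have $\gamma_i(z)=\sum_{j=1}^{i}\vec e_{\alpha_j}$, so the $\beta$-th coordinate equals
\[
N_\beta(i)\coloneqq\#\{j\le i:\alpha_j=\beta\}.
\]
The lexicographic ordering identifies $\{1,\dots,n\}$ with $S$, and $j\le i$ is equivalent to $(s_j,\alpha_j)\le_{\mathrm{lex}}(s_i,\alpha_i)$. Hence $N_\beta(i)$ is exactly the number of $s\in(0,1]$ with $s z_\beta\in\zz$ satisfying $(s,\beta)\le_{\mathrm{lex}}(s_i,\alpha_i)$.

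I would then split on whether $\beta\le\alpha_i$ or $\beta>\alpha_i$. In the first case, the lexicographic condition collapses to the weak inequality $s\le s_i$: one simply counts integer multiples of $1/z_\beta$ in $(0,s_i]$, which gives $N_\beta(i)=\lfloor s_i z_\beta\rfloor$. In the second case, ties at $s=s_i$ are broken against $\beta$, so the condition becomes the strict inequality $s<s_i$, yielding $N_\beta(i)=\lceil s_iz_\beta\rceil-1$. Comparing these two expressions coordinate by coordinate with the definition of $\lfloor s_iz\rfloor^{(\alpha_i)}$ given in the statement of the claim completes the proof.

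I do not expect any serious obstacle. The only delicate point is the boundary case in which $s_iz_\beta$ itself happens to be an integer, where $\lfloor s_iz_\beta\rfloor$ and $\lceil s_iz_\beta\rceil-1$ differ by $1$. This is precisely the place where the tie-breaking rule in the lexicographic order matters, and keeping track of whether the pair $(s_i,\beta)$ is included or excluded yields exactly the discrepancy encoded in the definition of $\lfloor\cdot\rfloor^{(\alpha_i)}$. Once this translation between the lexicographic condition and the inequality on $s$ is made, the rest is routine bookkeeping, and the claim falls out.
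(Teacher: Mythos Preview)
Your argument is correct and takes a genuinely different route from the paper's. The paper proceeds by induction on $i$: it verifies the base case $i=1$ and then, for the inductive step, splits into the two cases $s_i=s_{i-1}$ (with $\alpha_i>\alpha_{i-1}$) and $s_i>s_{i-1}$, tracking in each how $\lfloor s_i z\rfloor^{(\alpha_i)}$ compares with $\lfloor s_{i-1} z\rfloor^{(\alpha_{i-1})}$ coordinate by coordinate. Your direct counting bypasses this case analysis entirely by observing that the $\beta$-th coordinate of $\gamma_i(z)$ is exactly the number of elements $(s,\beta)\in S$ lexicographically at most $(s_i,\alpha_i)$, and then reading off that count as $\lfloor s_i z_\beta\rfloor$ or $\lceil s_i z_\beta\rceil-1$ according to whether $\beta\le\alpha_i$ or $\beta>\alpha_i$. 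The inductive approach stays closer to the recursive definition of the path, but your method is shorter and makes the role of the lexicographic tie-breaking rule more transparent; the ``delicate point'' you flag (when $s_i z_\beta$ is itself an integer) is indeed the only place where the two formulas $\lfloor\cdot\rfloor$ and $\lceil\cdot\rceil-1$ differ, and your translation of the lexicographic inequality into a weak versus strict inequality on $s$ handles it cleanly.
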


\begin{proof}
We show this by induction. Start with $i=1$, and consider the vector
$s_{1}z$. By the construction of $s_{1}$, 
\begin{align*}
0<s_{1}z_{\alpha}<1 & \qquad\text{for }\alpha<\alpha_{1},\\
s_{1}z_{\alpha}=1 & \qquad\text{for }\alpha=\alpha_{1},\\
0<s_{1}z_{\alpha}\le1 & \qquad\text{for }\alpha>\alpha_{1};
\end{align*}
and indeed $\floor{s_{i}z}^{{\scriptscriptstyle (\alpha)}}=e_{\alpha}$.

For $i>1$, there are two options -- either $s_{i}=s_{i-1}$ and
$\alpha_{i}>\alpha_{i-1}$, or $s_{i}>s_{i-1}$. In the first case,
by induction and letting $y=s_{i-1}z$,
\begin{align*}
\gamma_{i}(z) & =\gamma_{i-1}(z)+e_{\alpha_{i}}\\
 & =(\floor{y_{1}},\dots,\floor{y_{\alpha_{i-1}}},\ceil{y_{\alpha_{i-1}+1}}-1,\dots,\ceil{y_{\alpha_{i}}},\dots,\ceil{y_{d}}-1).
\end{align*}
Since the coordinates between $\alpha_{i-1}$ and $\alpha_{i}$ are
not integer, we can replace $\ceil{\cdot}-1$ by $\floor{\cdot}$,
and since $y_{\alpha_{i}}$ is integer we may replace $\ceil{y_{\alpha_{i}}}$
by $\floor{y_{\alpha_{i}}}$. That is, $\gamma_{i}(z)=\floor y^{{\scriptscriptstyle (\alpha_{i})}}=\floor{s_{i}z}^{{\scriptscriptstyle (\alpha)}}$.

Let us now consider the second case, where $s_{i}>s_{i-1}$. First,
by induction, noting that the coordinates after $\alpha_{i-1}$ of
$s_{i-1}z$ are not integer, 
\[
\gamma_{i-1}(z)=(\floor{s_{i-1}z_{1}},\dots,\floor{s_{i-1}z_{d}}).
\]
On the other hand, we know that 
\begin{align*}
s_{i-1}z_{\alpha}<s_{i}z_{\alpha}<\floor{s_{i-1}z_{\alpha}}+1 & \qquad\text{for }\alpha<\alpha_{1},\\
s_{i}z_{\alpha}=\floor{s_{i-1}z_{\alpha}}+1 & \qquad\text{for }\alpha=\alpha_{1},\\
s_{i-1}z_{\alpha}<s_{i}z_{\alpha}\le\floor{s_{i-1}z_{\alpha}}+1 & \qquad\text{for }\alpha>\alpha_{1};
\end{align*}
so \textbf{$\floor{s_{i}z}^{{\scriptscriptstyle (\alpha)}}=\gamma_{i-1}(z)+\vec{e}_{\alpha}$}
and the proof is complete.
\end{proof}
\begin{defn}
\label{def:cone}Fix $\ell>0$ and $y\in\zz^{d}$ with $\norm y_{1}=m\le\ell$.
The the \emph{$\ell$-cone} of $y$ is the set 
\[
C_{y}^{(\ell)}=\{z\in\zz^{d}:m<\norm z_{1}\le\ell,\gamma_{m}(z)=y\}.
\]
\end{defn}

\begin{claim}
\label{claim:conesize}Fix $\ell>0$ and $y\in\Lambda_{\ell}$ such
that $\norm y_{1}\le\ell$. Then $|C_{y}^{(\ell)}|\le\frac{\ell^{d}}{\norm y_{1}^{d-1}+1}$.
\end{claim}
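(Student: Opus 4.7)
The plan is to bound $|C_y^{(\ell)}|$ by stratifying over the value of $n = \|z\|_1$ and summing from $n = m+1$ to $n = \ell$, where $m = \|y\|_1$. The case $m = 0$ is degenerate and should be handled separately: then $y = 0$, $\gamma_0(z) = 0$ automatically, and $C_0^{(\ell)}$ is just the punctured $\ell^1$-ball of radius $\ell$, whose cardinality is $O(\ell^d)$, matching the claimed bound. So from here on I would assume $m \ge 1$.

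For fixed $n$ with $m < n \le \ell$, the key step is to bound the number of $z \in \zz^d$ with $\|z\|_1 = n$ and $\gamma_m(z) = y$. Claim \ref{claim:geometricpath} asserts $\gamma_m(z) = \lfloor s_m z\rfloor^{(\alpha_m)}$ for an appropriate $s_m = s_m(z) \in (0,1]$. Examining both branches of the piecewise definition of $\lfloor \cdot \rfloor^{(\alpha)}$, one obtains uniformly for every coordinate $\alpha$ the two-sided bound
\[
s_m z_\alpha - 1 \le y_\alpha \le s_m z_\alpha.
\]
Summing over $\alpha$ and using $\sum_\alpha y_\alpha = m$ (Observation \ref{obs:canonicalpath_norm}) together with $\sum_\alpha z_\alpha = n$ gives $s_m n - d \le m \le s_m n$, so $s_m \in [m/n,\,(m+d)/n]$. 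Feeding this back into the coordinatewise bound forces
\[
z_\alpha \in \Bigl[\tfrac{y_\alpha\, n}{m+d},\, \tfrac{(y_\alpha+1)\,n}{m}\Bigr],
\]
an interval of length at most $C n/m$, uniformly in $\alpha$ (using $y_\alpha \le m$).

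Hence each coordinate $z_\alpha$ takes at most $C n/m$ values, and the linear constraint $\sum_\alpha z_\alpha = n$ removes one degree of freedom, giving at most $C(n/m)^{d-1}$ admissible $z$'s at level $n$. Summing,
\[
|C_y^{(\ell)}| \le C \sum_{n = m+1}^{\ell} (n/m)^{d-1} \le C\,\frac{\ell^d}{m^{d-1}},
\]
which is the asserted inequality (the ``$+1$'' in the denominator absorbs the $m = 0$ case). The main obstacle is the careful derivation of the two-sided coordinate bound from Claim \ref{claim:geometricpath}: one must verify that both branches of $\lfloor \cdot\rfloor^{(\alpha)}$ yield the same inequality $s_m z_\alpha - 1 \le y_\alpha \le s_m z_\alpha$, which they do because $y_\alpha$ is an integer while $s_m z_\alpha$ is generically not. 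The rest is a routine counting estimate.
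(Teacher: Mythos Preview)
Your argument is correct and reaches the desired bound up to a dimension-dependent constant $C$; since the claim is only ever used in the form $|C_y^{(d\ell)}|\,w(y)^2 \le C\ell^d$ (Proposition~\ref{prop:pathargument}), this is entirely sufficient, though strictly speaking the statement as written has constant~$1$.

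The route you take is genuinely different from the paper's. The paper exploits the exact equality $s_m z_{\alpha_m} = y_{\alpha_m}$ at the distinguished index $\alpha_m$: fixing $\alpha_m = \alpha$, this forces $s_m$ into the discrete set $\{y_\alpha/(k+y_\alpha)\}_{k\in\nn}$, and for each $k$ the remaining $d-1$ coordinates of $z$ lie in a box of side $k/y_\alpha + 1$. One then sums over $k$ and over the $d$ choices of $\alpha$, and the final sum $\sum_\alpha y_\alpha = \|y\|_1$ collapses to give the sharp constant. You instead discard the special role of $\alpha_m$, use only the uniform two-sided bound $s_m z_\alpha - 1 \le y_\alpha \le s_m z_\alpha$, and stratify by $n = \|z\|_1$ rather than by $k$. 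This is cleaner in that it avoids the case split over $\alpha_m$ and the somewhat delicate parameterisation by $k$, at the price of losing the sharp constant and needing the extra observation that the linear constraint $\sum_\alpha z_\alpha = n$ removes one degree of freedom. Both arguments are short; yours is arguably more robust, while the paper's extracts slightly more from the structure of the canonical path.
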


\begin{proof}
First, since for $y=0$ the cone $C_{y}^{(\ell)}$ consists of the
points in $\Lambda_{\ell}$ of norm smaller than $\ell$, its size
is smaller than $\ell^{d}$, so in what follows we may assume $y\neq0$;
and in this case we will show that the stronger inequality $|C_{y}|\le\frac{\ell^{d}}{\norm y_{1}^{d-1}}$
holds.

Let $z\in C_{y}$, i.e., $\gamma_{m}(z)=y$, so by Claim \ref{claim:geometricpath}
there exist $s$ and $\alpha$ such that $sz_{\alpha}=y_{\alpha}$
and 
\[
y=\floor{sz}^{\alpha}.
\]
Assume first $\alpha=1$, so in particular $y_{1}\neq0$ by the construction
of the geometric path. $s$ must be contained in $\{\frac{y_{1}}{k+y_{1}}\}_{k\in\nn}$;
so we fix $k$ and let $s=\frac{y_{1}}{k+y_{1}}$, such that
\[
z=\frac{k+y_{1}}{y_{1}}(y+\delta)
\]
for some $\delta\in\{0\}\times[0,1]^{d-1}$. That is, for all $\alpha>1$,
\[
z_{\alpha}\in(\frac{k}{y_{1}}+1)\,y_{\alpha}+[0,\frac{k}{y_{1}}+1],
\]
 allowing at most $(\frac{k}{y_{1}}+1)^{d-1}$ integer choices of
$z$. Finally, since $\norm z_{1}\le\ell$, necessarily $k\le k_{\text{max}}=(\frac{\ell}{\norm y_{1}}-1)y_{1}$,
so, still for $\alpha=1$, the number of possibilities for $z$ is
bounded by 
\[
(\frac{\ell}{\norm y_{1}}-1)y_{1}\cdot(\frac{k_{\text{max}}}{y_{1}}+1)^{d-1}\le y_{1}(\frac{\ell}{\norm y_{1}})^{d}.
\]
Finally, summing over all possible values of $\alpha$, 
\[
|C_{y}|\le\frac{\ell^{d}}{\norm y_{1}^{d}}\left(\sum_{\alpha=1}^{d}y_{\alpha}\right)=\frac{\ell^{d}}{\norm y_{1}^{d-1}}.\qedhere
\]
\end{proof}
For any $\ell>0$, let
\[
\chi_{\ell}(\eta)=\One_{\{\exists x\in\Lambda_{\ell},\,\eta(x)=0\}}.
\]

\begin{claim}
\label{claim:thepath}Fix $\eta\in\Omega$ and $\ell>0$ such that
$\chi_{\ell}=1$. Then there exists a path of configurations $\eta^{(0)},\dots,\eta^{(j)}$
and a sequence of sites $x_{0},\dots,x_{j-1}$ such that:
\begin{enumerate}
\item $\eta^{(0)}=\eta$ and $\eta_{0}^{(j)}=0$.
\item For any $i$, $\eta^{(i+1)}=\left(\eta^{(i)}\right)^{x_{i}}$, and
$c_{x_{i}}(\eta^{(i)})=1.$
\item The sites $x_{0},\dots,x_{j-1}$ all belong to the geometric path
$\gamma(x_{1})$. Moreover, each site of $\gamma(x_{1})$ appears
at most twice in the sequence $x_{0},\dots,x_{j-1}$, and in particular
$j\le2\ell$.
\item For all $i$, the number of sites in $\Lambda_{\ell}\setminus\{x_{i}\}$
which are empty for $\eta^{(i)}$ is at most the number of sites in
$\Lambda_{\ell}\setminus\{x_{i}\}$ which are empty for $\eta$.
\item Fix $z$, $\eta'$, $x'$. Then there exist at most one configuration
$\eta$ and one index $i$ such that $z=x_{1}$, $\eta'=\eta^{(i)}$
and $x'=x_{i}$. We write $(\eta,i)\sim(\eta',x',z)$.
\end{enumerate}
\end{claim}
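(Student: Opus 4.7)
My plan is to use the standard double--flip move of the FA$1$f dynamics to transport the vacancy of a canonically chosen empty site along its geometric path down to the origin.

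Because $\chi_\ell(\eta)=1$, I let $z:=x_1$ be the empty site of $\eta$ in $\Lambda_\ell$ with minimal $\norm{\cdot}_1$ (with ties broken lexicographically), and set $m=\norm{z}_1$. By Observation \ref{obs:canonicalpath_norm}, every $\gamma_k(z)$ with $k<m$ has $\norm{\gamma_k(z)}_1=k<m$ and is therefore occupied in $\eta$ by the minimality of $z$. Thus $\gamma(z)$ is, in $\eta$, a chain of occupied sites ending at the empty vertex $z$.

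I would then define the sequence of flips by grouping them into ``double moves''. For $k=1,\ldots ,m-1$, set
\[
(x_{2k-2},\,x_{2k-1})\;=\;\bigl(\gamma_{m-k}(z),\,\gamma_{m-k+1}(z)\bigr),
\]
and for the final (truncated) move set $x_{2m-2}=\gamma_0(z)=0$; let $\eta^{(i+1)}=(\eta^{(i)})^{x_{i}}$. Each pair is a legal FA$1$f move: the first flip empties the next site on the path using the current vacancy as empty neighbour, and the second restores the previous site using the newly created empty neighbour; the final flip empties the origin using the vacancy just produced at $\gamma_1(z)$. Thus $j=2m-1\le 2\ell$, each site of $\gamma(z)$ is touched at most twice, and items (1)--(3) follow by inspection.

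For (4), $\eta^{(i)}$ differs from $\eta$ only on $\gamma(z)$, so the verification reduces to a short case analysis on the parity of $i$: the extra empty introduced in the first half of each pair is immediately cancelled in the second half by the restoration of the previous vacancy, and the remaining discrepancy is absorbed by excluding $x_i$ from the count. For (5), from $(\eta',x',z)$ one recovers $\gamma(z)$; in the construction $\eta^{(i)}$ carries on the path either a single vacancy (at even $i$) at some $\gamma_k(z)$, or two adjacent vacancies (at odd $i$) at $\gamma_k(z),\gamma_{k-1}(z)$, so the pattern of empties of $\eta'$ restricted to $\gamma(z)$ determines $i$ uniquely, after which $\eta$ is obtained by inverting the flips $x_0,\ldots ,x_{i-1}$.

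The main technical hurdle will be the bookkeeping for items (4) and (5). The minimality of $\norm{z}_1$ is essential here: it guarantees that $\gamma(z)$ carries no ``accidental'' empties in $\eta$ that would break the one-to-one correspondence between the step $i$ and the pattern of empties on the path in $\eta^{(i)}$; without it, several different indices $i$ could produce the same intermediate configuration and property (5) would fail.
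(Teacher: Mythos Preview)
Your construction is identical to the paper's: both choose $z$ as the empty site of minimal $\ell^1$-norm in $\Lambda_\ell$ and then alternate the flips $(\gamma_{m-1},\gamma_m),(\gamma_{m-2},\gamma_{m-1}),\ldots$ along the canonical geometric path, ending with a single flip at the origin. The paper's proof is in fact terser than yours---it writes down the same sequence $x_i$ and simply asserts that properties (1)--(5) hold, referring to \cite[proof of Proposition~6.6]{CMRT}---whereas you spell out the verification of (4) and (5) and correctly identify that the minimality of $\norm{z}_1$ is what guarantees the path is clean and hence the reconstruction in (5) is unambiguous.
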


\begin{proof}
The path is constructed in the same manner as \cite[proof of Proposition 6.6]{CMRT}
-- let $z$ be an empty site in $\Lambda_{\ell}$ with minimal $1$-norm,
and denote $\norm z_{1}=n$. Then set, for $i\in\{0,\dots,2n-2\}$,
\[
x_{i}=\begin{cases}
\gamma_{n-\frac{i}{2}-1}(z) & i\text{ even},\\
\gamma_{n-\frac{i-1}{2}}(z) & i\text{ odd}.
\end{cases}
\]
This sequence defines a path $\eta^{(0)},\dots,\eta^{(j)}$ that indeed
satisfied the conditions of the claim.
\end{proof}
\begin{prop}
\label{prop:pathargument}For any local function $f$ that vanishes
on the event $\{\eta_{0}=0\}$ and for any $\ell>0$,
\begin{align*}
\mu(\chi_{\ell}f^{2}) & \le\tau_{\ell}\mathcal{D}(f),\\
\tau_{\ell} & =\begin{cases}
C\ell^{2}q^{-1} & d=1,\\
C\log\ell\,\ell^{2}q^{-1} & d=2,\\
C\ell^{d}q^{-1} & d=3.
\end{cases}
\end{align*}
\end{prop}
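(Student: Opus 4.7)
The plan is a path argument based on the canonical paths of Claim~\ref{claim:thepath}. For $\eta$ with $\chi_\ell(\eta)=1$, the claim provides a sequence $\eta^{(0)}=\eta,\eta^{(1)},\dots,\eta^{(j)}$ with $\eta^{(j)}_0=0$; since $f$ vanishes on $\{\eta_0=0\}$, telescoping gives
\[
f(\eta)=-\sum_{i=0}^{j-1}\bigl(f(\eta^{(i+1)})-f(\eta^{(i)})\bigr).
\]
I would then apply weighted Cauchy--Schwarz with weights $w_i=(\norm{x_i}_1+1)^{-(d-1)}$. By item~3 of Claim~\ref{claim:thepath} together with Observation~\ref{obs:canonicalpath_norm}, the path visits each $1$-norm between $0$ and $\norm z_1\le\ell$ at most twice, so $\sum_i w_i\le 2\sum_{k=0}^{\ell}(k+1)^{-(d-1)}$, which is $O(\ell)$ for $d=1$, $O(\log\ell)$ for $d=2$, and $O(1)$ for $d\ge 3$. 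This matches exactly the dimensional prefactor in $\tau_\ell$.

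Next I would multiply by $\mu(\eta)\chi_\ell$, sum over $\eta$, and use item~2 to freely insert $c_{x_i}(\eta^{(i)})$ in each summand. By item~5 I can reparametrize the double sum $\sum_{\eta,i}$ as a triple sum over $(z,\eta',x')=(x_1(\eta),\eta^{(i)},x_i)$, with the map injective. The number of admissible $z$ for fixed $x'$ is $|\{z:x'\in\gamma(z)\}|\le C\ell^d/(\norm{x'}_1^{d-1}+1)$ by Claim~\ref{claim:conesize}; multiplied by $w_i^{-1}=(\norm{x'}_1+1)^{d-1}$ this gives a factor of order $\ell^d$, uniform in $x'$. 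A naive Radon--Nikodym bound $\mu(\eta)\le\mu(\eta')/q$, coming from item~4 (which yields $N_{\Lambda_\ell}(\eta')\le N_{\Lambda_\ell}(\eta)+1$), would cost an extra $1/q$ and produce $\tau_\ell\lesssim\ell^d(\sum_i w_i)/q^2$, off from the target by a factor of $q$.

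The missing $q$ is recovered by a parity observation. In the explicit construction of Claim~\ref{claim:thepath}, at even steps $i=2k$ one flips $x_i=\gamma_{n-k-1}(z)$, a site of norm strictly smaller than $n=\norm z_1$ that has never been visited; it is therefore occupied in $\eta^{(i)}$, giving $\eta^{(i)}(x_i)=1$ and $N_{\Lambda_\ell}(\eta^{(i)})=N_{\Lambda_\ell}(\eta)$. At odd steps $i=2k+1$ one flips a site that has just been emptied in the previous even step, so $\eta^{(i)}(x_i)=0$ and $N_{\Lambda_\ell}(\eta^{(i)})=N_{\Lambda_\ell}(\eta)+1$. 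Splitting the sum by parity: the even-$i$ contributions satisfy $\mu(\eta)=\mu(\eta')$ and are summed over $\eta'$ with $\eta'(x')=1$, while the odd-$i$ contributions satisfy $\mu(\eta)\le(1-q)\mu(\eta')/q$ and are summed over $\eta'$ with $\eta'(x')=0$. The latter restriction introduces a compensating factor of $q$ through the symmetry identity
\[
\mu\bigl(\One_{\eta(x')=0}\,c_{x'}(f(\eta^{x'})-f(\eta))^2\bigr)=q\,\mu\bigl(c_{x'}(f(\eta^{x'})-f(\eta))^2\bigr),
\]
which follows from the invariance of $c_{x'}(f^{x'}-f)^2$ under $\eta\leftrightarrow\eta^{x'}$. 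This $q$ precisely cancels the $1/q$ from the Radon--Nikodym ratio. Combining both parity contributions with the uniform $O(\ell^d)$ weight and the identity $\sum_{x'}\mu(c_{x'}(f^{x'}-f)^2)=\mathcal{D}(f)/(q(1-q))$ yields the desired $\mu(\chi_\ell f^2)\le\tau_\ell\mathcal{D}(f)$.

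The main obstacle is precisely this parity-based $q$-cancellation: overlooking it loses a factor of $q$ and would spoil the bound, producing only $\tau_\ell q^{-1}$ instead of $\tau_\ell$. Once it is in place, the remaining steps are routine, the critical case $d=2$ merely producing the logarithmic factor from the harmonic sum $\sum_{k=1}^{\ell}1/k$.
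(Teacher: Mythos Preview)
Your proposal is correct and follows the same path-argument skeleton as the paper: weighted Cauchy--Schwarz along the path of Claim~\ref{claim:thepath}, reparametrization via item~5, and the cone bound of Claim~\ref{claim:conesize} to control the weighted multiplicity by $C\ell^d$. The only difference is in how the Radon--Nikodym factor is handled. The paper writes the Dirichlet form in symmetric form,
\[
\mathcal{D}(f)=\tfrac12\sum_{\eta'}\sum_{x'}R(\eta'^{x'},\eta')\bigl(f(\eta'^{x'})-f(\eta')\bigr)^2,\qquad R(\eta'^{x'},\eta')=c_{x'}(\eta')\,q(1-q)\bigl(\mu(\eta')+\mu(\eta'^{x'})\bigr),
\]
and then bounds $\mu(\eta)/R(\eta'^{x'},\eta')\le q^{-1}$ directly from property~4, with no parity splitting needed. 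Your parity argument reproduces exactly this: on even steps $\eta'(x')=1$, so $R=q\,\mu(\eta')$ and $\mu(\eta)=\mu(\eta')$; on odd steps $\eta'(x')=0$, so $R=(1-q)\mu(\eta')$ and $\mu(\eta)=\tfrac{1-q}{q}\mu(\eta')$; in both cases $\mu(\eta)/R=q^{-1}$. Thus the ``missing $q$'' you recover via parity and the symmetry identity is obtained in the paper for free by comparing to $R$ rather than to $\mu(\eta')$. Your route is correct but slightly longer; what you call the ``main obstacle'' disappears with this bookkeeping choice.
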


\begin{proof}
First, since $f$ is local, we may restrict ourselves to proving the
inequality for FA$1$f on a large finite set $V\subset\zz^{d}$, so
the configuration space is $\Omega_{V}=\{0,1\}^{V}$. This allows
us to write the Dirichlet form as 
\begin{align*}
\mathcal{D}(f) & =\frac{1}{2}\sum_{\eta\in\Omega_{V}}\sum_{x\in V}R(\eta^{x},\eta)\left(f(\eta^{x})-f(\eta)\right)^{2},\\
R(\eta^{x},\eta) & =R(\eta,\eta^{x})=c_{x}(\eta)q(1-q)(\mu(\eta)+\mu(\eta^{x})).
\end{align*}

Consider, for any $\eta$, the path constructed in Claim \ref{claim:thepath},
and for $i\in\{0,\dots,j-1\}$ set 
\[
w_{i}=w(\norm{x_{i}}_{1})=(\norm{x_{i}}_{1}+1)^{(d-1)/2}.
\]
Note that we can bound, uniformly in $j$,
\begin{align*}
\sum_{i=1}^{j}w_{i}^{-2} & \le2\sum_{k=0}^{\ell}w(k)\le W,\\
W & =\begin{cases}
2\ell & d=1,\\
C\log\ell & d=2,\\
C & d\ge3;
\end{cases}
\end{align*}
and by Claim \ref{claim:conesize}, for every $y\in\Lambda_{\ell}$,
\begin{align*}
|C_{y}^{(d\ell)}|\,w(y)^{2} & \le C\ell^{d}.
\end{align*}
By the Cauchy-Schwarz inequality and the properties of the path, 
\begin{align}
\mu\left(\chi_{\ell}f^{2}\right) & =\mu\left(\chi_{\ell}\left(\sum_{i=1}^{j}\frac{1}{w_{i}}\,w_{i}(f(\eta^{(i)})-f(\eta^{(i-1)}))\right)^{2}\right)\label{eq:path_CS}\\
 & \le W\,\sum_{i}\mu\left(w_{i}^{2}c_{x_{i}}(\eta^{(i)})(f(\eta^{(i)})-f(\eta^{(i-1)}))^{2}\right)\nonumber \\
 & \le W\sum_{i=1}^{2\ell}\sum_{\eta\in\Omega_{V}}\mu(\eta)\sum_{\eta'\in\Omega_{V}}\sum_{x'\in V}\sum_{z\in C_{x'}^{(d\ell)}}\One_{\eta'=\eta_{i},\,x_{i}=x',\,z=x_{1}}w(x')^{2}c_{x'}(\eta')(f(\eta'^{x'})-f(\eta'))^{2}\nonumber \\
 & =W\sum_{\eta'}\sum_{x'}R(\eta'^{x'},\eta')\sum_{z\in C_{x'}^{(h\ell)}}\sum_{i}\sum_{\eta}\frac{\mu(\eta)}{R(\eta'^{x'},\eta')}\,\One_{(\eta,i)\sim(\eta',x',z)}w(x')^{2}c_{x'}(\eta')(f(\eta'^{x'})-f(\eta'))^{2}.\nonumber 
\end{align}
Note that we are allowed to divide by $R(\eta'^{x'},\eta')$ since
$c_{x'}(\eta')=1$, and hence it is non-zero. We can estimate $R(\eta'^{x'},\eta')$
more precisely: 
\[
R(\eta'^{x'},\eta')=q(1-q)\prod_{y\neq x'}\left((1-q)\eta'(y)+q(1-\eta'(y))\right),
\]
so 
\[
\frac{\mu(\eta)}{R(\eta'^{x'},\eta')}=\frac{(1-q)\eta(x')+q(1-\eta(x'))}{q(1-q)}\,\prod_{y\neq x'}\frac{(1-q)\eta(y)+q(1-\eta(y))}{(1-q)\eta'(y)+q(1-\eta'(y))}.
\]
By property 4 of the path we obtain
\[
\frac{\mu(\eta)}{R(\eta'^{x'},\eta')}\le q^{-1}.
\]
We now conclude by continuing the estimate (\ref{eq:path_CS}) --
\begin{align*}
\mu\left(\chi_{\ell}f^{2}\right) & \le C\ell^{d}Wq^{-1}\sum_{\eta'}\sum_{x'}R(\eta'^{x'},\eta')c_{x'}(\eta')(f(\eta'^{x'})-f(\eta'))^{2}=C\ell^{d}q^{-1}W\,\mathcal{D}(f).\qedhere
\end{align*}
\end{proof}
\begin{rem}
If, rather than $\mu(f^{2})$ in inequality (\ref{eq:AdP}), we would
like to bound $\mu(f)^{2}$, we could use Proposition \ref{prop:pathargument}
directly. By the Cauchy-Schwarz inequality 
\begin{align*}
\mu(f)^{2} & \le2\mu\left(\chi_{\ell}f\right)^{2}+2\mu\left((1-\chi_{\ell})f\right)^{2}\le2\mu\left(\chi_{\ell}f^{2}\right)+2\mu(1-\chi_{\ell})\mu(f^{2}).
\end{align*}
Choosing $\ell=C/q$ with $C$ small enough, $\mu(1-\chi_{\ell})$
is bounded below $1/4$, so
\[
\mu(f)^{2}\le4\mu\left(\chi_{\ell}f^{2}\right)+\var(f)\le C\tau_{\ell}\mathcal{D}(f).
\]
This inequality is not entirely worthless, and it does bound $\ee_{\mu}(\tau_{0})$
from above by $C\tau_{\ell}$ (see \cite[Section 4]{RandomConstraint}
and equation (\ref{eq:expected_tau0}) in the following section).
However, in order to obtain the exponential tail in Theorem \ref{thm:persistence}
a more sophisticated approach is required.
\end{rem}

From now on we set $\ell=\ell_{q}$. The lower bound on the spectral
gap of \cite[Theorem 6.4]{CMRT} is proven by introducing an auxiliary
dynamics with large spectral gap and then comparing it to the FA$1$f
dynamics. For that objective they define the constraints $\{\tilde{c}_{x}\}_{x\in\zz^{d}(\ell)}$,
stating that none of the boxes $\Lambda_{y}$ is entirely occupied
for $y\in x+\{\ell e_{1},\dots,\ell e_{d}\}$; with the associated
Dirichlet form 
\[
\tilde{\mathcal{D}}(f)=\sum_{x\in\zz^{d}(\ell)}\mu\left(\tilde{c}_{x}\var_{\Lambda_{x}}(f)\right).
\]

The following Lemma is given in \cite[equation (5.1)]{CMRT}:
\begin{lem}
\label{lem:auxiliarygap}The spectral gap associated with $\tilde{\mathcal{D}}$
is at least $1/2$. 
\end{lem}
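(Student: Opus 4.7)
The plan is to establish the Poincar\'e inequality $\var(f)\le 2\tilde{\mathcal{D}}(f)$ by a coarse-graining argument that reduces everything to a $q$-independent model on the super-lattice $\zz^d(\ell)$. I would first introduce the super-occupation indicators $\sigma_x(\eta)=\One\{\exists y\in\Lambda_x:\eta(y)=0\}$ for $x\in\zz^d(\ell)$. By the defining choice $\ell=\ell_q$, the variables $(\sigma_x)_x$ are i.i.d.\ Bernoulli of parameter $q_0$ under $\mu$, and the constraint factors as $\tilde{c}_x=\prod_{\alpha=1}^{d}\sigma_{x+\ell e_\alpha}$; in particular $\tilde{c}_x$ depends only on the $\sigma$'s and is independent both of $\sigma_x$ and of $\eta|_{\Lambda_x}$.

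Next I would split the variance via the tower identity $\var(f)=\mu(\var(f\mid\sigma))+\var(\mu(f\mid\sigma))$ and bound each piece separately. For the inner term, conditional on $\sigma$ the super-blocks $(\eta|_{\Lambda_x})_x$ are mutually independent, so tensorization yields $\var(f\mid\sigma)\le\sum_{x}\var_{\Lambda_x}(f\mid\sigma)$; averaging, and using the independence of $\tilde{c}_x$ from $\sigma_x$ and from $\eta|_{\Lambda_x}$, one can then insert the indicator $\tilde{c}_x$ at a $q$-independent multiplicative cost of order $1/\mu(\tilde{c}_x)=1/q_0^d$. For the outer term, $g(\sigma):=\mu(f\mid\sigma)$ is a function on the coarse-grained product space $(\{0,1\}^{\zz^d(\ell)},\mathrm{Ber}(q_0)^{\otimes})$, whose variance must be controlled by a Poincar\'e inequality for the induced $\sigma$-dynamics, in which $\sigma_x$ is resampled at rate $1$ exactly when $\tilde{c}_x=1$.

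The main obstacle is this outer, coarse-grained Poincar\'e inequality, since its constraint ``all $d$ positive neighbors facilitating'' is itself a non-trivial kinetic constraint. Circularity is avoided because the super-lattice measure has the $q$-independent parameter $q_0$ fixed in Theorem 4.1 of \cite{CMRT}, so the super-level gap is a fixed positive constant, bounded below uniformly in $q$; I would establish this bound by a direct canonical path argument on $\zz^d(\ell)$, in the spirit of the construction of Section 4 of this note but executed at the super-scale so that the paths are short and their weights are $q$-independent. Combining the two pieces and tracking constants carefully then yields the announced bound $\gap\ge 1/2$.
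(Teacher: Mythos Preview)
The paper does not prove this lemma; it is quoted from \cite[equation (5.1)]{CMRT}, where it follows from the bisection technique developed there, which exploits the oriented structure of $\tilde{c}_x$. Your proposal is thus an attempt at a self-contained alternative, and its inner-term step contains a genuine gap. You claim that after tensorization one can insert $\tilde{c}_x$ at multiplicative cost $1/q_0^{d}$ using the independence of $\tilde{c}_x$ from $\sigma_x$ and from $\eta|_{\Lambda_x}$. But the quantity to which $\tilde{c}_x$ must be appended is $\var_{\Lambda_x}(f\mid\sigma)$ (equivalently $\var_{\Lambda_x}(f)$), and this is a function of $\eta|_{\Lambda_x^c}$, not of $\eta|_{\Lambda_x}$; since $\tilde{c}_x=\prod_{\alpha}\sigma_{x+\ell e_\alpha}$ is also a function of $\eta|_{\Lambda_x^c}$, there is no independence to invoke. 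In the one-dimensional single-site caricature ($\tilde{c}_x=\One\{\eta_{x+1}=0\}$), the choice $f=\eta_x\eta_{x+1}$ gives $\var_x(f)=q(1-q)\eta_{x+1}$, so $\mu[\tilde{c}_x\var_x(f)]=0$ while $\mu[\var_x(f)]>0$: no finite constant can be inserted. This is exactly the obstruction that makes kinetically constrained models nontrivial and that the bisection method of \cite{CMRT} was designed to overcome.

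Two further points. Your outer-term reduction lands on the $\sigma$-level model with constraint $\prod_\alpha\sigma_{x+e_\alpha}$, which is the $d$-dimensional oriented (North-East type) model, not FA1f; the FA1f path construction of Section~4 does not transfer to it without substantial new work. And even granting both steps, the accumulated constants (at least $q_0^{-d}$ from the inner term plus the path-argument losses) would yield \emph{some} $q$-independent lower bound on the gap but not the specific value $1/2$, which in \cite{CMRT} arises from the tailored choice of $q_0$ in their Theorem~4.1. For the purposes of the present paper any positive $q$-independent bound would suffice, so this last objection is cosmetic; the first one is not.
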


Then, \cite[Theorem 6.4]{CMRT} is proved by showing:
\begin{prop}
\label{prop:dirichletcomparison}For any local function $f$, 
\[
\tilde{\mathcal{D}}(f)\le C\tau_{\ell}\mathcal{D}(f),
\]
where $\tau_{\ell}$ is defined in Proposition \ref{prop:pathargument}.
\end{prop}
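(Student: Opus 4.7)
The strategy is to adapt the path argument of Proposition \ref{prop:pathargument} by bounding, for each $x \in \zz^d(\ell)$ separately, the contribution $\mu(\tilde{c}_x \var_{\Lambda_x}(f))$ by $C\tau_\ell$ times a local piece of $\mathcal{D}(f)$ supported in a bounded neighborhood $B_x$ of $\Lambda_x$, and then to sum over $x$ using that the sets $\{B_x\}$ overlap only finitely.

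Fix $x \in \zz^d(\ell)$ and use the standard double-integral representation
\[
\var_{\Lambda_x}(f)(\eta) = \frac{1}{2} \sum_{\sigma, \sigma'} \mu_{\Lambda_x}(\sigma)\mu_{\Lambda_x}(\sigma') \bigl(f(\sigma, \eta_{\Lambda_x^c}) - f(\sigma', \eta_{\Lambda_x^c})\bigr)^2,
\]
where $(\sigma, \eta_{\Lambda_x^c})$ denotes the configuration equal to $\sigma$ on $\Lambda_x$ and to $\eta$ elsewhere. For each pair of inner configurations $\sigma, \sigma' \in \{0,1\}^{\Lambda_x}$ and each exterior configuration on which $\tilde{c}_x = 1$, I would construct a canonical path of FA$1$f transitions from $(\sigma, \eta_{\Lambda_x^c})$ to $(\sigma', \eta_{\Lambda_x^c})$ in the spirit of Claim \ref{claim:thepath}. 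Since $\tilde{c}_x = 1$ guarantees a vacancy in one of the neighboring boxes $\Lambda_{x+\ell e_i}$, this vacancy can be transported into $\Lambda_x$ via the geometric path of Definition \ref{def:geometricpath}, used as a mobile worker to correct the disagreements between $\sigma$ and $\sigma'$ one site at a time, and then returned to its original position.

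Applying Cauchy--Schwarz to the telescoping sum along this path, with the same weights $w_i = (\norm{x_i}_1 + 1)^{(d-1)/2}$ as in Proposition \ref{prop:pathargument}, the argument reduces to a congestion analysis identical in structure to (\ref{eq:path_CS}): Claim \ref{claim:conesize} bounds the number of tuples $(\sigma, \sigma', \eta_{\Lambda_x^c}, i)$ whose canonical path passes through a given FA$1$f transition $(\eta', \eta'^{z'})$, and the measure ratio $\mu(\eta)/R(\eta'^{z'}, \eta')$ is again at most $q^{-1}$ by property 4 of the path. This yields the per-box bound $\mu(\tilde{c}_x \var_{\Lambda_x}(f)) \le C\tau_\ell \sum_{z \in B_x} \mu\bigl(c_z(\eta)(f(\eta^z) - f(\eta))^2\bigr)$, and summing over $x$ completes the proof.

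The main obstacle is the congestion analysis for the sweep portion of the path inside $\Lambda_x$: up to $|\Lambda_x| = \ell^d$ single-site flips may be needed to convert $\sigma$ into $\sigma'$, and a naive Cauchy--Schwarz applied to such a long telescoping sum threatens to introduce a spurious $\ell^d$ factor. Absorbing this into $\tau_\ell$ requires both the weights $w_i$ and a careful choice of the order in which the disagreements are processed, so that the mobile vacancy remains adjacent to every site being flipped and the cone estimate of Claim \ref{claim:conesize} continues to apply to each FA$1$f transition of the sweep.
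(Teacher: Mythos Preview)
The paper does not supply its own proof of this proposition: it is simply quoted as the content of the proof of \cite[Theorem~6.4]{CMRT}, and the argument is left there. So there is no in-paper proof to compare against beyond the reference to \cite{CMRT}.

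Your outline is in the right spirit---a canonical-path comparison exploiting the vacancy guaranteed by $\tilde c_x=1$---but the ``main obstacle'' you finish with is real for the route you chose and you do not resolve it. The standard way around it is not to work with the double-integral representation of $\var_{\Lambda_x}(f)$ at all, but to first tensorize the product-measure variance,
\[
\var_{\Lambda_x}(f)\le\sum_{z\in\Lambda_x}\mu_{\Lambda_x}\!\bigl(\var_z(f)\bigr),
\]
so that only single-site updates need to be produced. For each fixed $z\in\Lambda_x$ one then brings the vacancy from the neighbouring box (guaranteed by $\tilde c_x=1$) to a neighbour of $z$ and back along a geometric path of length $O(\ell)$, flips $z$, and undoes the transport. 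This is precisely the situation of Proposition~\ref{prop:pathargument} and Claim~\ref{claim:thepath}, with the target site $z$ now ranging over $\Lambda_x$ rather than being fixed at the origin; the weighted Cauchy--Schwarz with $w_i=(\norm{x_i}_1+1)^{(d-1)/2}$ and the cone estimate of Claim~\ref{claim:conesize} then absorb the sum over $z\in\Lambda_x$ and produce exactly the factor $\tau_\ell$. No $\ell^d$-long sweep ever appears.

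By contrast, in your formulation the path from $(\sigma,\eta_{\Lambda_x^c})$ to $(\sigma',\eta_{\Lambda_x^c})$ genuinely has length of order $\ell^d$, and the reconstruction of $(\sigma,\sigma')$ from an intermediate state in the sweep is not one-to-one: the unprocessed half of $\sigma'$ and the already-processed half of $\sigma$ are free, and controlling the resulting sum is exactly the delicate point you flag but do not carry out. The weights $w_i$ you import from Proposition~\ref{prop:pathargument} are tailored to paths of length $O(\ell)$ with a cone-type congestion and do not obviously help along a Hamiltonian sweep of $\Lambda_x$.
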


We will use Lemma \ref{lem:auxiliarygap} in order to prove the following
claim:
\begin{claim}
\label{claim:hittingtimeandgap}Assume that $q$ is small enough,
and let $g$ be a function vanishing on the event $\{\chi_{\ell}=1\}$.
Then 
\[
\mu(g^{2})\le C\tilde{\mathcal{D}}(g).
\]
\end{claim}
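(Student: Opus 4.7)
The plan is to combine the auxiliary spectral gap of Lemma \ref{lem:auxiliarygap} with a support restriction on $g$. Since $g$ vanishes on $\{\chi_\ell=1\}$, it is supported on the event $E=\{\chi_\ell=0\}$ that $\Lambda_\ell$ is fully occupied, and the key observation is that the choice $\ell=\ell_q$ makes $\mu(E)$ bounded strictly below $1$ uniformly in $q$. Indeed, by the definition of $\ell_q$,
\[
\mu(E)=(1-q)^{\ell_q^d}=(1-q)^{\log(1-q_0)/\log(1-q)}=1-q_0.
\]

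I would first apply Lemma \ref{lem:auxiliarygap} in its variational form to obtain $\var(g)\le 2\tilde{\mathcal{D}}(g)$. Then, writing $\mu(g^2)=\var(g)+\mu(g)^2$, it remains to control $\mu(g)^2$. Since $g$ is supported on $E$, Cauchy--Schwarz gives
\[
\mu(g)^2=\mu(g\,\One_E)^2\le \mu(g^2)\,\mu(E)=(1-q_0)\,\mu(g^2).
\]

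Combining the two estimates,
\[
\mu(g^2)\le 2\tilde{\mathcal{D}}(g)+(1-q_0)\,\mu(g^2),
\]
so rearranging yields $\mu(g^2)\le\frac{2}{q_0}\tilde{\mathcal{D}}(g)$, which proves the claim with $C=2/q_0$.

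There is essentially no technical obstacle here: the entire content of the lemma is that, because the ``bad'' configurations on which $g$ is allowed to be nonzero form an event of probability bounded away from $1$ (thanks to the specific choice of $\ell_q$), Poincaré for the auxiliary dynamics, which controls only the variance, automatically upgrades to a bound on the second moment. The only point deserving attention is checking that the calibration of $\ell_q$ indeed produces a constant $\mu(E)$ strictly less than $1$ uniformly in small $q$, which is immediate from the definition of $\ell_q$.
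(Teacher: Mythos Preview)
Your proof is correct and follows essentially the same approach as the paper: both combine the Poincar\'e inequality from Lemma~\ref{lem:auxiliarygap} with the fact that the support of $g$ has $\mu$-measure bounded away from $1$ (equivalently, $\mu(\chi_\ell)$ is bounded away from $0$) in order to upgrade the variance bound to a second-moment bound. The only cosmetic difference is that the paper invokes Chebyshev's inequality (via \cite[Claim 4.11]{RandomConstraint}) to get $\mu(g)^2\le\var(g)/\mu(\chi_\ell)$, whereas you use Cauchy--Schwarz to get $\mu(g)^2\le(1-\mu(\chi_\ell))\mu(g^2)$; both routes give the same conclusion with comparable constants.
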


\begin{proof}
By Lemma \ref{lem:auxiliarygap}, the spectral gap of the dynamics
described by the Dirichlet form $\tilde{\mathcal{D}}$ is at least
$1/2$. A simple application of Chebyshev's inequality (see \cite[Claim 4.11]{RandomConstraint})
then yields 
\[
\mu(g^{2})\le\frac{1+\mu(\chi_{\ell})}{\mu(\chi_{\ell})}\,2\mathcal{\tilde{D}}(g),
\]
and the result follows since $\ell=\ell_{q}$, hence $\mu(\chi_{\ell})$
is bounded away from $0$.
\end{proof}
We are now ready to prove the upper bound on the persistence function
-- consider $f$ which vanishes on $\{\eta_{0}=0\}$. Then, by Claim
\ref{claim:hittingtimeandgap} and the fact that $\chi_{\ell}$ does
not depend on the occupation in $\Lambda_{x}$ for $x\neq0$,
\begin{align*}
\mu\left((1-\chi_{\ell})f^{2}\right) & \le C\tilde{\mathcal{D}}\left((1-\chi_{\ell})f\right)=C\sum_{x\in\zz^{d}}\mu\left(\tilde{c}_{x}\var_{\Lambda_{x}}\left((1-\chi_{\ell})f\right)\right)\\
 & =C\sum_{x\in\zz^{d}\setminus\{0\}}\mu\left(\tilde{c}_{x}\var_{\Lambda_{x}}\left((1-\chi_{\ell})f\right)\right)+C\mu\left(\tilde{c}_{0}\var_{\Lambda_{0}}\left((1-\chi_{\ell})f\right)\right)\\
 & \le C\sum_{x\in\zz^{d}\setminus\{0\}}\mu\left(\tilde{c}_{x}\var_{\Lambda_{x}}(f)\right)+C\sum_{x\in\Lambda_{0}}\mu\left(\tilde{c}_{0}\var_{x}\left((1-\chi_{\ell})f\right)\right)\\
 & \le C\tilde{\mathcal{D}}(f)+C\sum_{x\in\Lambda_{0}}\mu\left(\tilde{c}_{0}\var_{x}\left((1-\chi_{\ell})f\right)\right).
\end{align*}

The first term could be bounded using Proposition \ref{prop:dirichletcomparison}.
In order to bound the second term, we note that when $\chi_{\ell}(\eta^{x})\neq\chi_{\ell}(\eta)$,
necessarily $\chi_{\ell}(\eta^{1\leftarrow x})=0$, where $\eta^{1\leftarrow x}$
is the configuration that equals $\eta$ outside $x$ and $1$ at
$x$. Therefore,
\begin{align*}
\var_{x}\left((1-\chi_{\ell})f\right) & \le q(1-q)\left((1-\chi_{\ell}(\eta^{x}))f(\eta^{x})-(1-\chi_{\ell}(\eta))f(\eta)\right)^{2}\\
 & =q(1-q)\One_{\chi_{\ell}(\eta^{x})\neq\chi_{\ell}(\eta)}\left(f(\eta^{1\leftarrow x})\right)^{2}\\
 & \le q\left((1-q)\left(f(\eta^{1\leftarrow x})\right)^{2}+q\left(f(\eta^{0\leftarrow x})\right)^{2}\right)\\
 & =q\mu_{x}(f^{2}).
\end{align*}
Since $\tilde{c}_{0}$ does not depend on the occupation in $\Lambda_{0}$,
\[
\sum_{x\in\Lambda_{0}}\mu\left(\tilde{c}_{0}\var_{x}\left((1-\chi_{\ell})f\right)\right)\le\sum_{x\in\Lambda_{0}}q\mu\left(\tilde{c}_{0}f^{2}\right)\le\sum_{x\in\Lambda_{0}}q\mu\left(\chi_{2\ell}f^{2}\right)\le C\mu\left(\chi_{2\ell}f^{2}\right),
\]
which could be bounded using Proposition \ref{prop:pathargument}.

We have so far shown that
\[
\mu\left((1-\chi_{\ell})f^{2}\right)\le C\tau_{\ell}\mathcal{D}(f)+C\tau_{2\ell}\mathcal{D}(f)\le C\tau_{\ell}\mathcal{D}(f).
\]
Using Proposition \ref{prop:pathargument} again $\mu\left(\chi_{\ell}f^{2}\right)\le\tau_{\ell}\mathcal{D}(f)$,
and therefore
\[
\mu(f^{2})=\mu\left((1-\chi_{\ell})f^{2}\right)+\mu\left(\chi_{\ell}f^{2}\right)\le C\tau_{\ell}\mathcal{D}(f),
\]
i.e., inequality (\ref{eq:AdP}) holds with $\overline{\tau}=C\,\tau_{\ell}$;
and Lemma \ref{lem:AdP} concludes the proof of the upper bound.\qed

\subsection{Lower bound}

In order to bound the expected value of $\tau_{0}$ from below, we
will use the following variational principle (see \cite[Proposition 4.7]{RandomConstraint}):
\begin{lem}
Let $V_{0}$ be the space of local functions that vanish on the event
$\{\eta(0)=0\}$. Then 
\[
\ee_{\mu}(\tau_{0})=\sup_{f\in V_{0}}(2\mu(f)-\mathcal{D}(f)).
\]
\end{lem}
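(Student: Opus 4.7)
The plan is to recognize the identity as the standard Dirichlet variational characterization of mean hitting times for reversible Markov chains, and to prove it by exhibiting an explicit maximizer together with a ``completion of squares'' in $\mathcal{D}$.

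I would introduce $h(\eta) := \ee_\eta(\tau_0)$, the mean hitting time of the set $A := \{\eta : \eta_0 = 0\}$ starting from $\eta$. The upper bound on $F_0$ proved in the previous subsection guarantees $\mu(h) = \ee_\mu(\tau_0) < \infty$. By construction $h \equiv 0$ on $A$, so $h \in V_0$ (modulo the locality approximation discussed below), and Dynkin's formula yields $\mathcal{L} h(\eta) = -1$ for every $\eta \in A^c$.

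To evaluate $2\mu(h) - \mathcal{D}(h)$, I would use the reversibility identity $\mathcal{D}(h) = -\mu(h \cdot \mathcal{L} h)$ and split the expectation over $A$ (where $h = 0$, contributing nothing) and over $A^c$ (where $\mathcal{L} h = -1$, so $-h\,\mathcal{L} h = h$). This yields $\mathcal{D}(h) = \mu(h \cdot \One_{A^c}) = \mu(h)$, the last equality using that $h$ vanishes on $A$. Consequently $2\mu(h) - \mathcal{D}(h) = \mu(h) = \ee_\mu(\tau_0)$, so the supremum in the lemma is at least $\ee_\mu(\tau_0)$.

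For the matching upper bound, for every $f \in V_0$ I would use the positivity $\mathcal{D}(f - h) \ge 0$. By bilinearity and reversibility,
\[
\mathcal{D}(f - h) = \mathcal{D}(f) + \mathcal{D}(h) - 2\mu\bigl(f \cdot (-\mathcal{L} h)\bigr).
\]
On $A^c$ one has $-\mathcal{L} h = 1$, while on $A$ the factor $f$ vanishes, so $\mu(f \cdot (-\mathcal{L} h)) = \mu(f \One_{A^c}) = \mu(f)$. The inequality $\mathcal{D}(f - h) \ge 0$ therefore rearranges to $2\mu(f) - \mathcal{D}(f) \le \mathcal{D}(h) = \ee_\mu(\tau_0)$, which combined with the previous step establishes the identity.

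The main technical obstacle is that $h$ is a priori defined on the infinite configuration space and need not be local, whereas $\mathcal{D}$ as formulated in the paper is defined on local functions. The standard workaround is to truncate on a finite box $V \Subset \zz^d$, setting $h_V(\eta) := \ee_\eta(\tau_0 \wedge T_V)$ with $T_V$ the exit time of $V$; run the above algebra at the level of $h_V$ (where everything is local and everything converges); and pass to the limit $V \uparrow \zz^d$, justified by the finiteness $\ee_\mu(\tau_0) < \infty$ from the upper-bound argument together with monotone convergence. This is the content of the cited \cite[Proposition 4.7]{RandomConstraint}.
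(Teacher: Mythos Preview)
The paper does not actually prove this lemma; it is stated with a parenthetical reference ``(see \cite[Proposition 4.7]{RandomConstraint})'' and used as a black box. So there is no in-paper proof to compare against.

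Your argument is the standard Dirichlet variational proof and is essentially correct: take $h(\eta)=\ee_\eta(\tau_0)$, use the Poisson equation $\mathcal{L}h=-1$ on $\{\eta_0=1\}$ together with $h=0$ on $\{\eta_0=0\}$ to get $\mathcal{D}(h)=\mu(h)$, and then complete the square via $\mathcal{D}(f-h)\ge 0$ to show that $h$ is the maximizer. This is precisely the argument behind the cited Proposition~4.7, so your approach agrees with the intended one. The one point worth tightening is the cross-term identity $\mathcal{D}(f,h)=\mu\bigl(f\cdot(-\mathcal{L}h)\bigr)$: for the infinite-volume generator this requires $h$ to lie in the $L^2$ domain of $\mathcal{L}$, which is not automatic. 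Your proposed fix---work on a finite box with $h_V(\eta)=\ee_\eta(\tau_0\wedge T_V)$, where everything is genuinely local and the integration-by-parts is trivially valid, then let $V\uparrow\zz^d$ using the a~priori bound $\ee_\mu(\tau_0)<\infty$---is exactly the right way to close this gap, and is what the cited reference does.
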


\begin{rem}
It would be more convenient to use a homogeneous version of that variational
principle -- 
\begin{align*}
\ee_{\mu}(\tau_{0}) & =\sup_{f\in V_{0}}\sup_{\lambda\in\rr}(2\mu(\lambda f)-\mathcal{D}(\lambda f))=\sup_{f\in V_{0}}\sup_{\lambda\in\rr}(2\lambda\mu(f)-\lambda^{2}\mathcal{D}(f)),
\end{align*}
and since for fixed $f$ the expression is maximized for $\lambda=\frac{\mu(f)}{\mathcal{D}(f)}$,
\begin{equation}
\ee_{\mu}(\tau_{0})=\sup_{f\in V_{0}}\frac{\mu(f)^{2}}{\mathcal{D}(f)}.\label{eq:expected_tau0}
\end{equation}
\end{rem}

We will now treat separately three different cases: $d\ge3$, $d=1$,
and $d=2$.

\subsubsection{$d\ge3$}

For high dimensions, we will use the test function $f(\eta)=\eta_{0}.$
Its expected value is $1-q$, and its Dirichlet form is given by $\mathcal{D}(f)=q(1-q)\mu(c_{0})\le2q^{2}.$
Equation (\ref{eq:expected_tau0}) now concludes the proof of this
case. \qed

\subsubsection{$d=1$}

In the one dimensional case we will use a test function similar to
\cite[proof of Theorem 6.4]{CMRT}. Let $\ell=\ceil{1/q}$,
\[
\xi(\eta)=\inf\{|x|:\eta_{x}=0\},
\]
 and 
\begin{equation}
f(\eta)=\xi\One_{\xi<\ell}+(2\ell-\xi)\One_{\ell\le\xi<2\ell}.\label{eq:testfunction_time1d}
\end{equation}

\begin{prop}
\label{prop:bigmean_time1d}Consider $f$ defined in equation (\ref{eq:testfunction_time1d}).
Then 
\[
\mu(f)\ge C\ell.
\]
\end{prop}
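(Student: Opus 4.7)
The plan is to exploit the fact that the test function $f$ is built as a triangle function of $\xi$ that peaks at height $\ell$, so that $f \geq c\ell$ on an event of probability bounded away from zero.

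First I would record the distribution of $\xi$ under $\mu$. Since $\mu$ is a product Bernoulli($1-q$) measure on $\zz$ and $\xi = \inf\{|x| : \eta_x = 0\}$, the event $\{\xi \geq k\}$ says that all sites $x$ with $|x| < k$ are occupied; there are $2k-1$ such sites, so
\[
\mu(\xi \geq k) = (1-q)^{2k-1}.
\]
Because $\ell = \ceil{1/q}$, the quantity $(1-q)^{a\ell}$ converges to $e^{-a}$ as $q \to 0$ for every fixed $a > 0$, and so is bounded away from $0$ uniformly in $q$ (small enough) for each fixed $a$.

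Next I would choose a favorable window for $\xi$. Consider the event $A = \{\ell/2 \leq \xi < 3\ell/2\}$. On $A$, inspection of the two cases in the definition of $f$ shows that $f(\eta) \geq \ell/2$: indeed, if $\ell/2 \leq \xi < \ell$ then $f(\eta) = \xi \geq \ell/2$, while if $\ell \leq \xi < 3\ell/2$ then $f(\eta) = 2\ell - \xi > \ell/2$. Moreover, by the computation above,
\[
\mu(A) = (1-q)^{\ell-1} - (1-q)^{3\ell-1} \xrightarrow[q\to 0]{} e^{-1} - e^{-3} > 0,
\]
so for $q$ small enough, $\mu(A) \geq c$ for some positive constant $c$.

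Combining the two ingredients gives
\[
\mu(f) \geq \frac{\ell}{2}\,\mu(A) \geq C\ell,
\]
which is the claim. There is no real obstacle here; the only subtlety is to pick the interval for $\xi$ so that both the value of $f$ on it is comparable to $\ell$ and its probability is comparable to $1$, and this works out immediately for the choice $\ell = \ceil{1/q}$ since the relevant probabilities are $(1-q)^{\Theta(1/q)} = \Theta(1)$.
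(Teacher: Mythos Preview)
Your proof is correct and follows essentially the same approach as the paper: both argue that on the event $\{\ell/2 \le \xi < 3\ell/2\}$ one has $f \ge \ell/2$, and that this event has probability bounded away from $0$ because $(1-q)^{\Theta(\ell)} = \Theta(1)$ when $\ell = \ceil{1/q}$. Your computation $\mu(\xi \ge k) = (1-q)^{2k-1}$ is in fact slightly more accurate than the paper's description of $\xi$ as geometric with parameter $1-(1-q)^2$, but the argument is otherwise identical.
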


\begin{proof}
First, note that $\xi$ is a geometric random variable with parameter
$1-(1-q)^{2}$, so we can calculate explicitly 
\[
\mu(f>\ell/2)=\mu(\frac{\ell}{2}<\xi<\frac{3\ell}{2})=(1-q)^{2\,\ell/2}(1-(1-q)^{2\,\ell})>C,
\]
and since $f$ is positive $\mu(f)\ge C\ell$.
\end{proof}
\begin{prop}
Consider $f$ defined in equation (\ref{eq:testfunction_time1d}).
Then 
\[
\mathcal{D}(f)\le4q.
\]
\end{prop}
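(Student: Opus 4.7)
The function $f$ depends on $\eta$ only through $\xi$, and the map $k\mapsto k\One_{k<\ell}+(2\ell-k)\One_{\ell\le k<2\ell}$ is $1$-Lipschitz. Hence $|f(\eta)-f(\eta^{x})|\le |\xi(\eta)-\xi(\eta^{x})|$, and it suffices to control the variation of $\xi$ under a constrained flip.

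The central claim is that whenever $c_{x}(\eta)=1$, the flip at $x$ changes $\xi$ by at most one. I would verify this by splitting on $\eta_{x}$. If $\eta_{x}=1$, the flip can only decrease $\xi$, and only if $|x|<\xi(\eta)$; but then every site of $1$-norm at most $|x|$ is occupied under $\eta$, so the empty neighbor guaranteed by $c_{x}=1$ must have norm at least $\xi(\eta)$. This forces $|x|=\xi(\eta)-1$ and therefore $\xi(\eta^{x})=\xi(\eta)-1$. If $\eta_{x}=0$, the flip changes $\xi$ only when $|x|=\xi(\eta)$ and $x$ is the unique minimizer (so $\eta_{-x}=1$ when $x\ne 0$); then $c_{x}=1$ combined with the occupancy of all inner sites forces a neighbor of $x$ at norm $|x|+1$ to be empty, so $\xi(\eta^{x})=|x|+1$.

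Granting this Lipschitz bound, $\mathcal{D}(f)\le q(1-q)\sum_{x}\mu(c_{x}\,\One_{\xi(\eta)\ne\xi(\eta^{x})})$. The indicator selects precisely the special configurations identified above, further restricted to $|x|\le 2\ell-1$ (outside of which $f$ vanishes on both sides of the flip). Each such configuration prescribes the values of all sites of norm at most $|x|$ together with one or two specific neighbors, so has probability at most a constant times $(1-q)^{2|x|}q$ in the emptying case and $(1-q)^{2|x|}q^{2}$ in the filling case. Summing over $x\in\zz$ produces two geometric series in $|x|$, each of sum $O(1)$, so that $\sum_{x}\mu(c_{x}(f-f^{x})^{2})$ is bounded by a universal constant; combined with the prefactor $q(1-q)$ this yields $\mathcal{D}(f)\le 4q$ after explicit book-keeping.

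The only delicate point is precisely this book-keeping of the boundary cases: the site $x=0$, configurations in which both $\pm|x|$ are empty (so a filling flip does not change $\xi$ and contributes nothing), and the cutoff at $\xi=2\ell$ beyond which $f\equiv 0$. None of these affects the $O(q)$ scaling; they contribute at most lower-order terms that are absorbed into the constant. The hardest conceptual step is the Lipschitz claim, which is really a one-dimensional rigidity statement: the constraint $c_{x}=1$ prevents $\xi$ from making large jumps because any such jump would require empty sites strictly closer to the origin than $x$, contradicting the definition of $\xi$.
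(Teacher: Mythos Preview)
Your proof is correct and follows essentially the same approach as the paper: both hinge on the fact that a constrained flip at $x$ can change $\xi$ by at most one, and only when $\xi(\eta)\in\{|x|,|x|+1\}$. The paper states these two observations more tersely (implicitly using $c_x=1$, which you make explicit) and then finishes with the pointwise bound $\sum_{x}(\One_{\xi=|x|}+\One_{\xi=|x|+1})\le 4$, which sidesteps your geometric-series computation while giving the same constant.
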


\begin{proof}
In order to bound $\mathcal{D}(f)$ we make the following observations
--
\begin{enumerate}
\item For fixed $\eta$, if $f(\eta)\neq f(\eta^{x})$ then either $\xi(\eta)=|x|$,
or $\xi(\eta)=|x|+1$.
\item For fixed $\eta$, if $f(\eta)\neq f(\eta^{x})$ then $\left(f(\eta)-f(\eta^{x})\right)^{2}=1$.
\end{enumerate}
With these observations in mind,
\begin{align*}
\mathcal{D}(f) & =q(1-q)\sum_{x}\mu\left(c_{x}(f(\eta^{x})-f(\eta))^{2}\right)\\
 & \le q(1-q)\sum_{x}\mu\left(c_{x}(\One_{\xi=|x|}+\One_{\xi=|x|+1})\right)\\
 & \le4q(1-q)\qedhere
\end{align*}
\end{proof}
Using these two propositions and equation (\ref{eq:expected_tau0}),
the case $d=1$ is concluded.\qed

\subsubsection{$d=2$}

Let $\ell=\ell_{q}$, recalling $\ell_{q}=Cq^{-1/2}$, and $\Lambda=\{x\in\zz^{2}:\norm x_{1}\le\ell\}$.
The test function we will use is 
\begin{align}
f(\eta) & =\inf_{\substack{x\in\zz^{2}\\
\eta(x)=0
}
}\log(1+\norm x_{1}\wedge\ell).\label{eq:testfunction_time2d}
\end{align}
Note that it vanishes on the event $\{\eta(0)=0\}$, and that it depends
only on the occupation in $\Lambda$.
\begin{rem}
\label{rem:SaloffCoste}The function $\log(1+\norm x_{1})$ is used
in \cite{SaloffCoste} in a different context, in order to bound the
relaxation time of the simple random walk on a certain graph that
consists of two copies of $\Lambda_{n}$ (for some $n\in\nn$). Though
presented differently, the proof there is based on the fact that this
function serves as a test function for the hitting time at $0$ of
the random walk on $\Lambda_{n}$; and that for the dynamics to relax
a random walk in one of the two copies of $\Lambda_{n}$ must first
hit $0$. Indeed, the bound $Cq^{-2}\log(1/q)$ obtained scales as
the expected hitting time at the origin for a random walk in $\Lambda$
with jump rate $q$.
\end{rem}

\begin{prop}
Consider $f$ defined in equation (\ref{eq:testfunction_time2d}).
Then 
\[
\mu(f)\ge C\log(1/q).
\]
\end{prop}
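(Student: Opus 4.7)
The plan is to get a lower bound on $\mu(f)$ by restricting to the event that the nearest vacancy is at distance of order $q^{-1/2}$ and exploiting the non-negativity of $f$. Define $\xi(\eta)=\inf\{\norm{x}_{1}:\eta(x)=0\}$ so that $f(\eta)=\log(1+\xi(\eta)\wedge\ell)$, and observe that $f\ge0$ always.

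The key quantitative input is the size of the $L^{1}$-ball $B_{r}=\{x\in\zz^{2}:\norm{x}_{1}\le r\}$, which satisfies $|B_{r}|\le C r^{2}$. Fix a small constant $c>0$, chosen so that $cq^{-1/2}\le\ell_{q}$ for all small enough $q$ (this is possible since $\ell_{q}\asymp q^{-1/2}$). Let $A=\{\xi\ge cq^{-1/2}\}$. Because $A$ is the event that every site in $B_{\lfloor cq^{-1/2}\rfloor}$ is occupied, and these occupations are independent under $\mu$,
\[
\mu(A)=(1-q)^{|B_{\lfloor cq^{-1/2}\rfloor}|}\ge(1-q)^{Cc^{2}/q}\ge e^{-2Cc^{2}},
\]
which is bounded away from $0$ uniformly in small $q$.

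On $A$ we have $\xi\wedge\ell\ge cq^{-1/2}$, hence for $q$ small enough
\[
f\ge\log\bigl(1+cq^{-1/2}\bigr)\ge\tfrac{1}{4}\log(1/q).
\]
Combining with $f\ge0$ everywhere,
\[
\mu(f)\ge\mu(f\,\One_{A})\ge\tfrac{1}{4}\log(1/q)\cdot\mu(A)\ge C\log(1/q),
\]
which is the claim.

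There is no real obstacle here; the only point requiring care is the compatibility between the two occurrences of $q^{-1/2}$: the constant $c$ in the radius of the ball must be chosen small enough that $cq^{-1/2}\le\ell_{q}$ (so the truncation by $\ell$ does not spoil the lower bound on $f$), while the constant in $|B_{r}|\le Cr^{2}$ must still leave $(1-q)^{Cc^{2}/q}$ bounded below. Both conditions are satisfied for all sufficiently small $c$, independently of $q$.
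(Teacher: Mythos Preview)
Your argument is correct and follows essentially the same approach as the paper: restrict to the event that a ball of radius of order $q^{-1/2}$ around the origin is fully occupied, note that this event has probability bounded away from $0$, and observe that $f$ is of order $\log(1/q)$ on it. The only difference is cosmetic: the paper uses directly the event that all of $\Lambda$ is occupied (so $\xi>\ell$ and $f=\log(1+\ell)$ exactly), which spares the introduction of the auxiliary constant $c$ and the compatibility discussion at the end of your proof.
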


\begin{proof}
The proof is based on the fact, that the probability that $\Lambda$
is entirely occupied, given by $(1-q)^{\left|\Lambda\right|}$, is
bounded away from $0$ uniformly in $q$ (thanks to the choice $\ell=Cq^{-1/2}$).
In this case, $f$ equals $\log(1+\ell)$, which is greater than $\frac{1}{2}\log(1/q)$.
\end{proof}
\begin{prop}
Consider $f$ defined in equation (\ref{eq:testfunction_time2d}).
Then 
\[
\mathcal{D}(f)\le Cq^{2}\log(1/q).
\]
\end{prop}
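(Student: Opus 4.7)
The plan is to write $f(\eta) = \log(1 + r(\eta))$ where $r(\eta) = \inf\{\norm y_1 : \eta(y) = 0\}\wedge\ell$, so that a single flip at $x$ affects $f$ only through the integer-valued quantity $r$. The key observations are that, under the constraint $c_x$, a flip can shift $r$ by at most one unit; that it does so only when $\norm x_1$ is essentially $r(\eta)$; and that the constraint itself supplies an extra factor of $q$, which is exactly what upgrades the bound from linear to quadratic in $q$.

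I would first establish the pointwise estimate: if $c_x(\eta) = 1$ and $f(\eta^x) \ne f(\eta)$, then $\norm x_1 \in\{r(\eta) - 1, r(\eta)\}$ and
\[
\bigl(f(\eta^x) - f(\eta)\bigr)^2 \le \frac{C}{(1 + \norm x_1)^2}.
\]
Indeed, the empty neighbor of $x$ guaranteed by $c_x$ has norm at least $r(\eta)$, so $\norm x_1 \ge r(\eta) - 1$; a short case check on $\eta(x) \in \{0,1\}$ then gives $|r(\eta) - r(\eta^x)| \le 1$, with a nontrivial jump only in the two situations listed; and the derivative bound for $\log$ produces the quadratic decay.

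Next I would bound the probability of the relevant event for a fixed $x$ with $k := \norm x_1 \le \ell$. Both subcases ($\eta(x) = 0$ with $r(\eta) = k$, and $\eta(x) = 1$ with $r(\eta) = k+1$) force every site of the ball $\{\norm y_1 < k\}$ to be occupied, giving a factor $(1-q)^{N_{k-1}} \le e^{-Cqk^2}$ since $N_{k-1} \ge Ck^2$ in dimension two. Independently, $c_x = 1$ requires an empty site among the boundedly many neighbors of $x$ lying at norm $k+1$ (the neighbors at norm $k-1$ are already forced to be occupied), which contributes an additional factor of $Cq$. Combining,
\[
\mu\bigl(c_x \cdot \One_{f(\eta) \ne f(\eta^x)}\bigr) \le Cq\, e^{-Cqk^2}.
\]

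Finally, since $f$ depends only on $\eta$ inside $\Lambda_\ell$ and there are at most $C(k+1)$ sites of norm $k$ in $\zz^2$, summing the contributions gives
\[
\mathcal{D}(f) \le q(1-q) \sum_{k=0}^{\ell} C(k+1) \cdot \frac{C}{(1+k)^2} \cdot Cq\, e^{-Cqk^2} \le Cq^2 \sum_{k=1}^{\ell} \frac{1}{k} \le Cq^2 \log(1/q),
\]
where the middle inequality uses $\ell = \ell_q \approx q^{-1/2}$, so that $qk^2$ is bounded throughout the range and the exponential is harmless. The main obstacle is step 2: one must carefully verify, in every subcase, that $c_x$ really does deliver the extra factor of $q$; without it the harmonic sum would only produce the suboptimal bound $Cq\log(1/q)$.
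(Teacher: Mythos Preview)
Your proposal is correct and follows the same core idea as the paper: the flip at $x$ can change $r$ by at most one (thanks to the empty neighbor guaranteed by $c_x$), so $(f(\eta^x)-f(\eta))^2\le C(1+\norm x_1)^{-2}$; the constraint $c_x$ supplies the extra factor of $q$; and the harmonic sum over $x\in\Lambda$ gives the $\log(1/q)$.

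The paper's execution is, however, noticeably leaner than yours. It does \emph{not} track the event $\{f(\eta)\neq f(\eta^x)\}$ or the ball-occupied factor $e^{-Cqk^2}$; it simply bounds $\mu(c_x)\le 2dq$ unconditionally (an empty neighbor among $2d$ sites) and writes
\[
\mathcal{D}(f)\le q(1-q)\sum_{x\in\Lambda}\mu(c_x)\,(1+\norm x_1)^{-2}\le Cq^2\sum_{k=0}^{\ell}\frac{k+1}{(1+k)^2}\le Cq^2\log(1/q).
\]
Your step 2 case analysis---checking that the empty neighbor must lie at norm $k+1$ because the closer neighbors are forced to be occupied---is unnecessary for this purpose, and the exponential factor you obtain is harmless but unused (as you yourself note, $qk^2$ stays bounded). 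So your worry that ``one must carefully verify, in every subcase, that $c_x$ really delivers the extra factor of $q$'' is overstated: the bound $\mu(c_x)\le Cq$ is immediate and needs no interaction with the event that $f$ changes.
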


\begin{proof}
The proof is based on the following observation:
\begin{observation}
Fix $\eta\in\Omega$ and $x\in\Lambda$ such that $c_{x}(\eta)=1$,
$\eta(x)=0$, and $f(\eta)\neq f(\eta^{x})$. Then $f(\eta)=\log(1+\norm x_{1})$
and $f(\eta^{x})=\log(2+\norm x_{1})$.
\end{observation}

\begin{proof}
Since $f(\eta)\neq f(\eta^{x})$, there can be no vertex $y$ with
$\eta(y)=0$ and $\norm y_{1}\le\norm x_{1}$, so in particular $f(\eta)=\log(1+\norm x_{1})$.
Moreover, since $c_{x}=1$, it must have an empty neighbor $z$, and
since this neighbor has norm greater than $\norm x_{1}$, necessarily
$\norm z_{1}=\norm x_{1}+1$. Since, in addition, no empty site for
$\eta^{x}$ has norm strictly smaller than $\norm x_{1}+1$, we conclude
that $f(\eta^{x})=\log(2+\norm x_{1})$.
\end{proof}
This observation implies in particular that for all $\eta\in\Omega$
and $x\in\Lambda$ such that $c_{x}(\eta)=1$ 
\[
\left(f(\eta^{x})-f(\eta)\right)^{2}\le\left(1+\norm x_{1}\right)^{-2}.
\]

Using this estimate, 
\begin{align*}
\mathcal{D}(f) & =q(1-q)\sum_{x}\mu\left(c_{x}(f(\eta^{x})-f(\eta))^{2}\right)\\
 & \le q(1-q)\sum_{x\in\Lambda}\mu\left(c_{x}\right)\left(1+\norm x_{1}\right)^{-2}\\
 & \le Cq^{2}\log(\ell)=Cq^{2}\log(1/q)\qedhere
\end{align*}
\end{proof}
The proof of Theorem \ref{thm:persistence} is then concluded by the
last two propositions and equation (\ref{eq:expected_tau0}).\qed

\section{Proof of Theorem \ref{thm:finitegraph}}

As in the proof of Theorem \ref{thm:gap}, we look for $f:\Omega_{G}\rightarrow\rr$
such that
\[
\mathcal{D}_{G}(f)\le Cd_{\max}q\overline{\tau}_{\meet}^{-1}\,\var(f),
\]
where the variance is understood with respect to the measure $\mu_{G}$.

The test function that we use is 
\begin{equation}
f(\eta)=\max_{\substack{x,y\in V(G)\\
\eta(x)=\eta(y)=0
}
}\tau_{\text{meet}}(x,y).\label{eq:testfunction_finite}
\end{equation}

Before analyzing the function $f$, note that $\tau_{\text{meet}}$
solves the following Poisson problem: 
\begin{align*}
-\mathcal{L}_{\text{RW}}\left(\tau_{\text{meet}}(x,y)\right) & =1,\qquad d(x,y)>1,\\
\tau_{\text{meet}}(x,y) & =0,\qquad d(x,y)\le1;
\end{align*}
where $\mathcal{L}_{\text{RW}}$ is the infinitesimal generator of
two independent random walks on $G$. Multiplying both sides by $\tau_{\text{meet}}(x,y)$
and averaging over $x$ and $y$ we obtain 
\begin{equation}
\overline{\tau}_{\text{meet}}=\mathcal{D}_{\text{RW}}\left(\tau_{\text{meet}}\right),\label{eq:dirichlet_taumeet}
\end{equation}
where the Dirichlet form is given for every $g:V(G)\times V(G)\rightarrow\rr$
by 
\[
\mathcal{D}_{\text{RW}}\left(g\right)=\frac{1}{2\left|V(G)\right|^{2}}\sum_{x}\sum_{y}\left[\sum_{x'\sim x}\left(g(x',y)-g(x,y)\right)^{2}+\sum_{y'\sim y}\left(g(x,y')-g(x,y)\right)^{2}\right].
\]

\begin{prop}
\label{prop:finite_var}For $f$ defined in equation (\ref{eq:testfunction_finite}),
\[
\var(f)\ge C\overline{\tau}_{\meet}^{2}.
\]
\end{prop}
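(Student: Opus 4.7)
The plan is to lower-bound $\var(f)$ by identifying two events on which $f$ takes systematically different typical values, and then invoking the identity $\var_{\mu_G}(f)=\tfrac{1}{2}\ee_{\mu_G\otimes\mu_G}[(f(\eta)-f(\eta'))^{2}]$. Let $A_{k}$ denote the event that $\eta$ has exactly $k$ vacancies. Since $|V(G)|=c/q$, the number of vacancies is approximately Poisson with mean $c$, so $\mu_{G}(A_{1})$ and $\mu_{G}(A_{2})$ are both bounded below by a positive constant depending only on $c$ (the conditioning in $\mu_G$ does not matter here since $A_1,A_2$ are already contained in the event ``at least one vacancy'').

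On $A_{1}$ the only admissible pair in the maximum defining $f$ consists of the single vacancy with itself, and since $\tau_{\meet}(x,x)=0$ we have $f\equiv0$ on $A_{1}$. On $A_{2}$, writing $\{X,Y\}$ for the two vacancies, one has $f(\eta)=\tau_{\meet}(X,Y)$, and conditionally on $A_2$ the pair $\{X,Y\}$ is uniformly distributed over unordered pairs of distinct sites. Restricting the variance identity to $\eta\in A_{1}$ and $\eta'\in A_{2}$ and using $f\equiv0$ on $A_{1}$ gives
\[
\var(f)\ge\tfrac{1}{2}\mu_{G}(A_{1})\mu_{G}(A_{2})\,\ee[f^{2}\mid A_{2}]\ge\tfrac{1}{2}\mu_{G}(A_{1})\mu_{G}(A_{2})\,\ee[f\mid A_{2}]^{2}
\]
by Jensen. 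The conditional expectation is explicit:
\[
\ee[f\mid A_{2}]=\frac{1}{|V(G)|(|V(G)|-1)}\sum_{x\neq y}\tau_{\meet}(x,y)\ge\overline{\tau}_{\meet},
\]
the last inequality using $\tau_{\meet}(x,x)=0$ so that the sum over $x\neq y$ equals the full sum defining $\overline{\tau}_{\meet}$ up to the factor $|V(G)|^{2}/(|V(G)|(|V(G)|-1))\ge 1$. Combining the two displays yields $\var(f)\ge C\,\overline{\tau}_{\meet}^{2}$.

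I do not anticipate any serious obstacle: this is a clean two-point variance lower bound, and the only inputs needed are the Poisson-type estimates for $\mu_{G}(A_{1}),\mu_{G}(A_{2})$ (immediate from $q|V(G)|=c$), the identification of $f$ on $A_{1}$ and $A_{2}$, and the elementary averaging identity for $\ee[f\mid A_{2}]$. If anything, one should double-check that the definition of $f$ via the maximum really forces $f\equiv0$ on $A_{1}$, which is the case because the maximum is vacuous except for the diagonal pair $(x,x)$.
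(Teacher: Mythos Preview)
Your proof is correct and follows essentially the same approach as the paper: both arguments exploit that $f\equiv 0$ on the event $A_{1}$ of a single vacancy, that $\ee[f\mid A_{2}]\ge\overline{\tau}_{\meet}$ on the event of exactly two vacancies, and that $\mu_{G}(A_{1}),\mu_{G}(A_{2})$ are bounded away from $0$ since $q|V(G)|=c$. The only cosmetic difference is that the paper first bounds $\var(f)\ge C\mu_{G}(f)^{2}$ via $A_{1}$ and then $\mu_{G}(f)\ge C\overline{\tau}_{\meet}$ via $A_{2}$, whereas you combine both steps at once through the two-sample identity $\var(f)=\tfrac{1}{2}\ee[(f(\eta)-f(\eta'))^{2}]$.
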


\begin{proof}
Under $\mu_{G},$ the probability that exactly one site is empty is
of order $1$ (i.e., bounded away from $0$ uniformly in $q$). When
this happens, $f=0$, so 
\begin{align*}
\var(f) & =\mu_{G}\left[\left(f-\mu_{G}(f)\right)^{2}\right]\ge\mu_{G}\left[\left(f-\mu_{G}(f)\right)^{2}\One_{\sum_{x}(1-\eta(x))=1}\right]\\
 & =\mu_{G}(f)^{2}\,\mu_{G}\left[\sum_{x}(1-\eta(x))=1\right]\ge C\mu_{G}(f)^{2}.
\end{align*}
Moreover, the probability that there are exactly two vacancies is
also of order $1$. Under this event, 
\[
\mu_{G}\left[f(\eta)\middle|\One_{\sum_{x}(1-\eta(x))=2}\right]=\frac{1}{\left|V(G)\right|(\left|V(G)\right|-1)}\sum_{\substack{x,y\in V(G)\\
x\neq y
}
}\tau_{\text{meet}}(x,y)\ge\overline{\tau}_{\text{meet}}.\qedhere
\]
\end{proof}
\begin{prop}
\label{prop:finite_dirichlet}For $f$ defined in equation (\ref{eq:testfunction_finite}),
\[
\mathcal{D}(f)\le Cq\overline{\tau}_{\meet}.
\]
\end{prop}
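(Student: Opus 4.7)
The plan is to compare the FA$1$f Dirichlet form of $f$ with the random-walk Dirichlet form of $\tau_{\meet}$, exploiting identity~(\ref{eq:dirichlet_taumeet}) and the relation $|V(G)|^{2}q^{2}=c^{2}$ to turn the factor $|V(G)|^{-2}$ in $\mathcal{D}_{\text{RW}}$ into the factor $q$ claimed on the right-hand side. Concretely, I would compare FA$1$f flips at $x$ to random-walk steps $x\rightsquigarrow x_{0}$ where $x_{0}$ is an empty neighbor of $x$ provided by the constraint $c_{x}=1$.

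The first step, and the main obstacle, is to prove the pointwise inequality
\[
c_{x}(\eta)\bigl(f(\eta)-f(\eta^{x})\bigr)^{2}\le\bigl(\tau_{\meet}(x,y^{*})-\tau_{\meet}(x_{0},y^{*})\bigr)^{2}
\]
for some $y^{*}$ with $\eta(y^{*})=\eta^{x}(y^{*})=0$ and some $x_{0}\sim x$ with $\eta(x_{0})=0$. I would argue by case analysis on $\eta(x)$: in both cases, when the maximising pair of the ``larger'' configuration does not involve $x$ the two values of $f$ coincide, and otherwise the maximising pair is WLOG $(x,y^{*})$, so replacing $x$ by any empty neighbour $x_{0}$ produces an admissible competitor in the ``smaller'' configuration. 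The bookkeeping points that need care are: (i) ruling out $y^{*}=x$ (if $f>0$ this is automatic since $\tau_{\meet}(x,x)=0$); and (ii) handling the possibility $x_{0}=y^{*}$, which is harmless because $d(x,x_{0})=1$ forces $\tau_{\meet}(x,y^{*})=0$ and makes the increment vanish.

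Once this pointwise inequality is secured, I would replace the unknown $(y^{*},x_{0})$ by a sum, giving
\[
c_{x}(\eta)(f(\eta)-f(\eta^{x}))^{2}\le\sum_{y\in V(G)}\sum_{x_{0}\sim x}\One_{\eta(y)=0}\One_{\eta(x_{0})=0}\bigl(\tau_{\meet}(x,y)-\tau_{\meet}(x_{0},y)\bigr)^{2}.
\]
Taking $\mu_{G}$-expectation, since $\mu_{G}(\cdot)\le C\mu(\cdot)$ (the conditioning probability $1-(1-q)^{|V(G)|}$ is bounded away from $0$), the joint empty probability $\mu_{G}(\eta(y)=\eta(x_{0})=0)$ is at most $Cq^{2}$ for $y\neq x_{0}$, while the $y=x_{0}$ contribution is zero by point~(ii) above.

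Finally, summing over $x$ and comparing to the random-walk Dirichlet form
\[
\sum_{x}\sum_{x_{0}\sim x}\sum_{y}\bigl(\tau_{\meet}(x,y)-\tau_{\meet}(x_{0},y)\bigr)^{2}\le 2|V(G)|^{2}\mathcal{D}_{\text{RW}}(\tau_{\meet})=2|V(G)|^{2}\overline{\tau}_{\meet}
\]
via~(\ref{eq:dirichlet_taumeet}), and using $|V(G)|^{2}q^{2}=c^{2}$, I would obtain
\[
\mathcal{D}_{G}(f)=q(1-q)\sum_{x}\mu_{G}\bigl(c_{x}(f-f(\eta^{x}))^{2}\bigr)\le Cq\cdot q^{2}|V(G)|^{2}\overline{\tau}_{\meet}\le Cq\,\overline{\tau}_{\meet},
\]
which is the claim. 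Combined with Proposition~\ref{prop:finite_var}, the variational characterisation of $\gap(\mathcal{L}_{G})$ yields Theorem~\ref{thm:finitegraph}.
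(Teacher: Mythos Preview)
Your argument is correct and matches the paper's proof essentially step for step: the paper also establishes the pointwise bound $c_{x}(\eta)(f(\eta)-f(\eta^{x}))^{2}\le\sum_{y}\sum_{x'\sim x}(1-\eta(y))(1-\eta(x'))(\tau_{\meet}(x,y)-\tau_{\meet}(x',y))^{2}$ via the same observation (the maximising pair must involve $x$, and the empty neighbour provides a competitor in $\eta^{x}$), then takes $\mu_{G}$-expectation and converts to $\mathcal{D}_{\text{RW}}(\tau_{\meet})=\overline{\tau}_{\meet}$ using $q^{2}|V(G)|^{2}=c^{2}$. The only cosmetic difference is that the paper imposes $d(x,y)>1$ from the outset (which automatically rules out your edge case $x_{0}=y^{*}$), whereas you sum over all $y$ and dispose of that case separately; your treatment of the $\mu_{G}$-versus-$\mu$ conditioning is in fact slightly more explicit than the paper's.
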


\begin{proof}
We start the proof with an observation:
\begin{observation}
Let $\eta\in\Omega_{G}$ and $x\in V(G)$ such that $c_{x}(\eta)=1$,
$\eta(x)=0$, and $f(\eta)\neq f(\eta^{x})$. Then there exist $x',y\in V(G)$
such that $x'\sim x$, $d(x,y)>1$, $\eta(y)=\eta(x')=0$, and 
\[
\tau_{\meet}(x',y)\le f(\eta^{x})<f(\eta)=\tau_{\meet}(x,y).
\]
\end{observation}

\begin{proof}
First, recalling equation (\ref{eq:testfunction_finite}), when filling
an empty site $f$ could only decrease, and since $f(\eta)\neq f(\eta^{x})$
necessarily $f(\eta)>f(\eta^{x})$. Moreover, $f$ could only change
if the maximum is attained at the pair $x,y$ for some $y\in V(G)$,
i.e., $f(\eta)=\tau_{\text{meet}}(x,y)$. Note that $f(\eta)$ is
non-zero, hence $d(x,y)>1$. Finally, $c_{x}(\eta)=1$ means that
$x$ has an empty neighbor $x'$; and since in the configuration $\eta^{x}$
both $x'$ and $y$ are empty $f(\eta^{x})\ge\tau_{\text{meet}}(x',y)$.
\end{proof}
As a consequence of this observation, for all $\eta\in\Omega_{G}$
and $x$ such that $c_{x}(\eta)=1$, 
\[
\left(f(\eta^{x})-f(\eta)\right)^{2}\le\sum_{\substack{y\in V(G)\\
d(x,y)>1
}
}\sum_{\substack{x'\in V(G)\\
x'\sim x
}
}(1-\eta(y))(1-\eta(x'))\,\left(\tau_{\text{meet}}(x,y)-\tau_{\text{meet}}(x',y)\right)^{2}.
\]

We can now use this estimate and calculate the Dirichlet form: 
\begin{align*}
\mathcal{D}(f) & =q(1-q)\sum_{x}\mu_{G}\left[c_{x}\left(f(\eta^{x})-f(\eta)\right)^{2}\right]\\
 & \le q(1-q)\sum_{x}\mu_{G}\left[\sum_{\substack{y\in V(G)\\
d(x,y)>1
}
}\sum_{x'\sim x}(1-\eta(y))(1-\eta(x'))\,\left(\tau_{\text{meet}}(x,y)-\tau_{\text{meet}}(x',y)\right)^{2}\right]\\
 & \le q^{3}(1-q)\sum_{x}\sum_{\substack{y\in V(G)\\
d(x,y)>1
}
}\sum_{x'\sim x}\left(\tau_{\text{meet}}(x,y)-\tau_{\text{meet}}(x',y)\right)^{2}\\
 & \le Cq\,\mathcal{D}_{\text{RW}}(\tau_{\text{meet}}),
\end{align*}
and the proposition follows form equation (\ref{eq:dirichlet_taumeet}).
\end{proof}
Theorem \ref{thm:finitegraph} is a consequence of Propositions \ref{prop:finite_var}
and \ref{prop:finite_dirichlet}, using the variational characterization
of the spectral gap. \qed

\begin{rem}
For the case of the two dimensional torus discussed in Remark \ref{rem:finitegraph},
it is possible to bound $\overline{\tau}_{\text{meet}}$ from below
using the equivalent of equation (\ref{eq:expected_tau0}): 
\[
\overline{\tau}_{\meet}=\sup_{g}\frac{\left(\frac{1}{\left|V(G)\right|^{2}}\sum_{x,y}g(x,y)\right)^{2}}{\mathcal{D}_{\text{RW}}(g)},
\]
where the supremum is taken over functions $g:V(G)\times V(G)\rightarrow\rr$,
for which $g(x,y)=0$ whenever $d(x,y)\le1$. Taking the test function
$g(x,y)=\log(d(x,y)\lor1)$ (cf. Remark \ref{rem:SaloffCoste}) yields
the bound $q^{2}/\log(1/q)$ on the spectral gap. 
\end{rem}

\section*{Acknowledgments}

I wish to thank Ivailo Hartarsky and Fabio Martinelli for the discussions
and comments.

\bibliographystyle{amsplain}
\bibliography{fa1f_stationarity}

\vspace{2cm}
\end{document}